\newtheorem{theorem}{Theorem}[section]
\newtheorem{lemma}[theorem]{Lemma}
\newtheorem{corollary}[theorem]{Corollary}
\newtheorem{conjecture}[theorem]{Conjecture}
\newtheorem{claim}[theorem]{Claim}
\title{New sum-product estimates for real and complex numbers}
\author{Antal Balog\footnote{The first author was partially supported by HNSF Grant K104183 and K109789.} \,and Oliver Roche-Newton\footnote{The second author was partially supported by Grant ERC-AdG. 321104 and EPSRC Doctoral Prize Scheme (Grant Ref:  EP/K503125/1)}}
\begin{document}
\maketitle

\begin{abstract} A variation on the sum-product problem seeks to show that a set which is defined by additive and multiplicative operations will always be large. In this paper, we prove new results of this type. In particular, we show that for any finite set $A$ of positive real numbers, it is true that
$$\left|\left\{\frac{a+b}{c+d}:a,b,c,d\in{A}\right\}\right|\geq{2|A|^2-1}.$$
As a consequence of this result, it is also established that
$$|4^{k-1}A^{(k)}|:=|\underbrace{\underbrace{A\cdots{A}}_\textrm{k times}+\cdots{+A\cdots{A}}}_\textrm{$4^{k-1}$ times}|\geq{|A|^k}.$$
Later on, it is shown that both of these bounds hold in the case when $A$ is a finite set of complex numbers, although with smaller multiplicative constants.
\end{abstract}

\section{Introduction}

Given a pair of finite sets $A,B\subset{\mathbb{R}}$, we define their sum set, and respectively their product set, to be the sets
$$A+B:=\{a+b:a\in{A},b\in{B}\}$$
and
$$AB:=\{ab:a\in{A},b\in{B}\}.$$
More generally, for a given integer $k\geq{2}$, we can define the $k$-fold sum set and $k$-fold product set of $A$ to be the sets
$$kA:=\{a_1+\cdots+a_k:a_1,\cdots,a_k\in{A}\}$$
and
$$A^{(k)}:=\{a_1\cdots{a_k}:a_1,\cdots,a_k\in{A}\},$$
respectively. The Erd\H{o}s-Szemer\'{e}di sum-product conjecture \cite{ES} states that, for every finite set $A\subset{\mathbb{Z}}$, we have
$$\max{\{|A+A|,|AA|\}}\gg{|A|^{2-\epsilon}},\,\,\,\,\,\,\,\,\,\,\,\,\,\forall{\epsilon>0}.$$
They also gave a conjecture for $k$-fold sum and product sets which claimed that
$$\max{\{|kA|,|A^{(k)}|\}}\gg{|A|^{k-\epsilon}},\,\,\,\,\,\,\,\,\,\,\,\,\,\,\,\,\forall{\epsilon>0}.$$
Although these conjectures were originally stated for integers, it is natural to extend the problem to the case in which $A$ is a set of real numbers. The case of $n=2$ is the most famous one, and for the reals, and indeed integers, the best result comes from a beautiful argument of Solymosi \cite{solymosi}, who used elementary geometry to prove that
$$\max{\{|A+A|,|AA|\}}\geq{\frac{|A|^{4/3}}{2\lceil{\log{|A|}}\rceil}},$$
for a set $A$ of positive real numbers.

In a similar spirit to the Erd\H{o}s-Szemer\'{e}di conjecture, one might expect that given a set $A$ of numbers, a set formed by a combination of additive and multiplicative operations applied to elements of $A$ will grow significantly. Some progress has been made in this direction; for instance, in \cite{balog} the first author proved that
\begin{equation}
|AA+AA+AA+AA|\geq{\frac{1}{2}|A|^2},
\label{4AA}
\end{equation}
for a set $A$ of positive reals, using arguments similar to those of Solymosi \cite{solymosi}. In \cite{rectangles}, the second author and Misha Rudnev used a more involved geometric argument, building on the work of Guth and Katz \cite{GK} on the Erd\H{o}s distance problem, in order to prove that
\begin{equation}
|(A+A)(A+A)|\gg{\frac{|A|^2}{\log{|A|}}},
\label{rect}
\end{equation}
for any $A\subset{\mathbb{R}}$. In fact, \eqref{rect} follows as a consequence of the following result\footnote{The bound in \eqref{solutions} is the content of Proposition 4 in \cite{rectangles}, and the bulk of that paper is devoted to proving this.}: the number of solutions to the equation
\begin{equation}
(a_1+a_2)(a_3+a_4)=(a_5+a_6)(a_7+a_8),
\label{solutions}
\end{equation}
such that $a_i\in{A}$, is $O(|A|^6\log{|A|})$. Once we have an effective upper bound for the number of solutions to \eqref{solutions}, it requires only a simple application of the Cauchy-Schwarz inequality to complete the proof of \eqref{rect}. In fact, since the number of solutions to \eqref{solutions} is exactly the same\footnote{We can avoid any potential issues arising from division by zero by simply assuming that $A$ is a set of positive reals, and this will only change the implied constant in the eventual result.} as the number of solutions to
$$\frac{a_1+a_2}{a_5+a_6}=\frac{a_7+a_8}{a_3+a_4},$$
an application of Cauchy-Schwarz, twinned with the aforementioned bound on the number of solutions to \eqref{solutions}, is also sufficient to conclude that
\begin{equation}
\left|\frac{A+A}{A+A}\right|\gg{\frac{|A|^2}{\log{|A|}}}.
\label{weak}
\end{equation}

The first new result in this paper provides an improvement on \eqref{weak}, by showing that
\begin{equation}
\left|\frac{A+A}{A+A}\right|\geq{2|A|^2-1},
\label{mainresult1}
\end{equation}
holds for a finite set $A$ of positive reals. As well as giving a quantitative improvement on \eqref{weak} by removing a log factor and giving nice constants, the proof of \eqref{mainresult1} is elementary, and is inspired by the geometric argument of Solymosi \cite{solymosi}. We will see later that advantage of the simplicity of this approach is that we can extend the result to hold for complex numbers. On the other hand, it is not known at present whether the methods established by Guth and Katz extend to the complex setting.

We remark that this bound is optimal, up to the multiplicative constant, as is illustrated by the case when $A=\{1,\cdots,|A|\}$. Indeed, for some small examples, this bound is completely tight, and it can be checked that if $A=\{1,2,3\}$ then 
$$\left|\frac{A+A}{A+A}\right|=17=2|A|^2-1.$$
If $A=\{1,\cdots,|A|\}$, and $|A|$ is much larger, then the bound in Theorem \ref{main1} is also close to being tight. Indeed, if $A$ takes this form, then
$$A+A=\{2,\cdots,2|A|\}\subset{\{1,\cdots,2|A|\}}.$$
This means that
$$\frac{A+A}{A+A}\subset{\left\{\frac{n}{m}:n,m\in{\mathbb{Z}},1\leq{n,m}\leq{2|A|}\right\}}.$$
The size of the set on the right-hand side is precisely the number of pairs of coprime integers $(n,m)$ such $1\leq{n,m}\leq{2|A|}$, which tends to $(2|A|)^2\frac{6}{\pi^2}$ as $|A|$ tends to infinity. Therefore, we can construct an arbitrarily large set $A$ such that
$$\left|\frac{A+A}{A+A}\right|<\frac{24}{\pi^2}|A|^2\approx{2.4317|A|^2}.$$
On the other hand, the logarithmic factor cannot be completely eliminated from \eqref{rect}; if we again take $A=\{1,\cdots,|A|\}$, then
$$(A+A)(A+A)\subset{\{1,\cdots,2|A|\}\{1,\cdots,2|A|\}},$$
and it is known that the product set of the first $2|A|$ integers has cardinality $o(|A|^2)$. See Ford \cite{ford} for a more accurate statement regarding the size of this product set. It is interesting to note that these two similar looking problems - finding the best possible lower bounds for $|(A+A)(A+A)|$ and $\left|\frac{A+A}{A+A}\right|$ - have subtle differences. For further comparison, we point out that a geometric result of Ungar \cite{ungar} concerning the number of distinct directions determined by a planar point set yields the bound
$$\left|\frac{A-A}{A-A}\right|\geq{|A|^2-1}$$
as a corollary.

As an application of our bound for the size of $\frac{A+A}{A+A}$, we combine this with a sum-product type lemma from \cite{balog}, and use an inductive argument to show that
\begin{equation}
|4^{k-1}A^{(k)}|\geq{|A|^k},
\label{mainresult2}
\end{equation}
for a set of positive reals $A$, and any positive integer $k$. Note that the case when $k=2$ gives an improved constant in the bound \eqref{4AA}. As was noted in \cite{balog}, this bound is close to optimal. For instance, if $A=\{1,\cdots,N\}$, then $AA\subset{\{1,\cdots,N^2\}}$, which implies that
$$AA+AA+AA+AA\subset{\{4,\cdots,4N^2\}},$$
and so $|AA+AA+AA+AA|<4|A|^2$. One might compare the bound \eqref{mainresult2} with the work of Bourgain and Chang \cite{BC} on $k$-fold sum-product estimates. It was established in \cite{BC} that given an integer $b$, there exists $k=k(b)\in{\mathbb{Z}}$ such that
$$\max{\{|kA|,|A^{(k)}|\}}>|A|^b$$
holds for any set $A\subset{\mathbb{Z}}$. One can take $k(b)=C^{b^4}$, for some absolute constant $C$.

Recently, Konyagin and Rudnev \cite{KR} extended Solymosi's sum-product estimate, up to multiplicative constants, to the case whereby $A\subset{\mathbb{C}}$. Since \eqref{mainresult1} and \eqref{mainresult2} are proved here by a geometric argument which is similar to Solymosi's, the work of Konyagin and Rudnev lays a foundation for these results to be extended to the complex setting as well. It turns out that proving a version of \eqref{mainresult1} for complex numbers is relatively straightforward, but generalising \eqref{mainresult2} requires more care. We consider these problems for the reals in section 2, and the complex setting is covered in section 3.

\section{The Real Setting}

\begin{theorem} \label{main1} If $A$ is a finite set of positive reals, then
$$\left|\frac{A+A}{A+A}\right|\geq{2|A|^2-1}.$$
\end{theorem}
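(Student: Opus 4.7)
The plan is to adapt Solymosi's geometric sum-product argument. First I would embed $A\times A$ as a set of $|A|^2$ points in the positive quadrant of $\mathbb{R}^2$, and group these points by the slope of the line through the origin on which they lie. Writing the distinct slopes in increasing order as $s_1<s_2<\cdots<s_m$ with multiplicities $r_i$ (i.e.\ $r_i$ points of $A\times A$ on the line $L_i$ of slope $s_i$), one has $\sum_{i=1}^m r_i=|A|^2$. Two initial observations are key: each $s_i$ already lies in $\frac{A+A}{A+A}$, since if $(b_0,a_0)\in A\times A$ sits on $L_i$ then $s_i=a_0/b_0=(a_0+a_0)/(b_0+b_0)$; and because $A$ is a set of positive reals with a unique minimum and maximum, the extreme slopes $s_1=\min(A)/\max(A)$ and $s_m=\max(A)/\min(A)$ are each attained by exactly one pair, so $r_1=r_m=1$.

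Next, for each consecutive pair $L_i,L_{i+1}$ I would examine the Minkowski sums of a point on $L_i$ and a point on $L_{i+1}$. Writing these as $(b,s_ib)$ with $b$ in some set $B_i\subseteq A$ of size $r_i$, and $(c,s_{i+1}c)$ with $c\in B_{i+1}\subseteq A$, $|B_{i+1}|=r_{i+1}$, the sum $(b+c,\,s_ib+s_{i+1}c)$ sits in $(A+A)\times(A+A)$, so its slope belongs to $\frac{A+A}{A+A}$. That slope equals
$$\frac{s_ib+s_{i+1}c}{b+c}=s_i+(s_{i+1}-s_i)\cdot\frac{c}{b+c},$$
which is a strictly increasing function of $c/b$ taking values strictly inside $(s_i,s_{i+1})$. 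Hence the number of distinct slopes arising from this pair of lines is at least $|B_{i+1}/B_i|$, and by the elementary bound $|X/Y|\ge|X|+|Y|-1$ for finite sets of positive reals this is at least $r_i+r_{i+1}-1$.

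Since the open intervals $(s_i,s_{i+1})$ are pairwise disjoint and contain none of the $s_j$, summing yields
$$\left|\frac{A+A}{A+A}\right|\ge m+\sum_{i=1}^{m-1}(r_i+r_{i+1}-1)=2|A|^2+1-r_1-r_m=2|A|^2-1,$$
using $r_1=r_m=1$. The main subtlety I expect is recognising that these endpoint multiplicities are forced to equal one: without that observation the count falls short by $r_1+r_m-2$, and one might be tempted to hunt in vain for additional slopes lying outside $[s_1,s_m]$, whereas in reality $s_1$ and $s_m$ already saturate the extremes of $\frac{A+A}{A+A}$.
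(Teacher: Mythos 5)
Your proof is correct and follows essentially the same geometric route as the paper: the same point set $A\times A$, the same covering by lines through the origin, the same observations that each $s_i$ itself lies in $\frac{A+A}{A+A}$ and that $r_1=r_m=1$, and the same count of $r_i+r_{i+1}-1$ new slopes strictly between consecutive lines, summed to give $2|A|^2-1$. The only (minor) stylistic difference is that where the paper writes out an explicit monotone chain $R(p_i^{(1)}+p_{i+1}^{(n_{i+1})})>\cdots>R(p_i^{(n_i)}+p_{i+1}^{(1)})$ to exhibit $r_i+r_{i+1}-1$ distinct slopes, you observe that the slope $s_i+(s_{i+1}-s_i)\frac{c/b}{1+c/b}$ is a strictly increasing function of $c/b$, so the count is exactly $|B_{i+1}/B_i|$, and then invoke the elementary bound $|X/Y|\ge|X|+|Y|-1$ for positive reals; the two are the same argument, with your version making slightly more transparent that the interval $(s_i,s_{i+1})$ in fact picks up $|B_{i+1}/B_i|\ge r_i+r_{i+1}-1$ ratios.
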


\begin{proof}

Following the setup of Solymosi, consider the point set $P=A\times{A}$, which lies in the positive quadrant of the plane $\mathbb{R}^2$. Observe that the sum set $P+P$ has the property that
\begin{equation}
P+P=(A\times{A})+(A\times{A})=(A+A)\times{(A+A)}.
\label{fact1}
\end{equation}
One can cover the point set $P$ by lines through the origin, as shown in Figure 1. These lines are labelled $l_1,\cdots,l_k$, in increasing order of gradient. To be more precise, the line $l_i$ has equation $y=m_ix$, and $m_1<\cdots<m_k$. 

Since each point from $P$ lies on exactly one of these $k$ lines, it follows that
\begin{equation}
\sum_{i=1}^k|l_i\cap{P}|=|P|=|A|^2.
\label{obvious}
\end{equation}

Notice that the number $k$ of lines through the origin covering $P$ is equal to the cardinality of the ratio set $A/A$. This is because the set of $\{m_1,\cdots,m_k\}$ of slopes is precisely the ratio set $A/A$. By the same reasoning, the quantity we are interested in, the cardinality $\left|\frac{A+A}{A+A}\right|$, is precisely the number of lines through the origin needed to cover the point set $(A+A)\times{(A+A)}$. 

Therefore, by \eqref{fact1}, we have reduced the problem of bounding $\left|\frac{A+A}{A+A}\right|$ from below, to the geometric problem of showing that $P+P$ determines\footnote{The number of lines $P+P$ \textit{determines} is just the number of lines through the origin which are needed in order to cover $P+P$.} many lines through the origin. The aim is now to show that $P+P$ determines at least $2|A|^2-1$ such lines.

To this end, we begin by recalling a geometric observation of Solymosi: for any $1\leq{i}\leq{k-1}$, and given two points $p=(p_1,p_2)\in{l_i\cap{P}}$ and $q=(q_1,q_2)\in{l_{i+1}\cap{P}}$, the vector sum $p+q$ lies strictly inside the region of the positive quadrant in between $l_i$ and $l_{i+1}$. This is simply because

\begin{equation}
\frac{p_2}{p_1}<\frac{q_2}{q_1}\Rightarrow{\frac{p_2}{p_1}<\frac{p_2+q_2}{p_1+q_1}<\frac{q_2}{q_1}}.
\label{segment}
\end{equation}

Given a point $p=(p_1,p_2)$ in the plane, the notation $R(p)$ will now be used to denote the gradient of the line connecting the origin and $p$, so that
\begin{equation}
R(p):=\frac{p_2}{p_1}.
\label{Rdef}
\end{equation}

Next, we make the important observation that, if $p\in{l_i\cap{P}}$ and $q,q'\in{l_{i+1}\cap{P}}$ with magnitude $|q'|>|q|$, then

\begin{equation}
R(p+q)<R(p+q').
\label{fact2}
\end{equation}

The best explanation for why this is true comes from studying Figure 1, but a purely algebraic proof can also be given. Indeed, given these conditions, one can check that
$$\frac{q_2'-q_2}{q_1'-q_1}=\frac{q_2'}{q_1'},$$
and therefore
\begin{align}
\frac{p_2+q_2}{p_1+q_1}<\frac{p_2+q_2'}{p_1+q_1'}&\Leftrightarrow{p_1q_2+q_1'p_2<p_1q_2'+q_1p_2}
\\&\Leftrightarrow{p_2(q_1'-q_1)<p_1(q_2'-q_2)}
\\&\Leftrightarrow{\frac{p_2}{p_1}<\frac{q_2'-q_2}{q_1'-q_1}=\frac{q_2'}{q_1'}}. \label{last}
\end{align}
The hypothesis that $(p_1,p_2)\in{l_i}$ and $(q_1,q_2)\in{l_{i+1}}$ implies that $\frac{p_2}{p_1}<\frac{q_2'}{q_1'}$, so that \eqref{last} holds, and this verifies claim \eqref{fact2}. The same argument shows that, if $q\in{l_{i+1}}$ and $p,p'\in{l_i}$ with $|p'|>|p|$, then
\begin{equation}
R(p+q)>R(p'+q).
\label{fact3}
\end{equation}
Note that it is at this point that we make use of the condition in the theorem that $A$ contains only positive elements. In the step leading immediately to \eqref{last}, we do not need to reverse the direction of the inequality when dividing by $p_1$, since the fact that $p\in{P}$ means that $p_1\in{A}$, which in turn implies that $p_1>0$. This condition is used similarly to establish \eqref{segment}.

Next, let us focus on the number of lines determined by $P+P$ whose slope belongs to the open interval $(m_i,m_{i+1})$. In other words, let us focus on the elements of $\frac{A+A}{A+A}$ inside this interval. The following argument, illustrated by Figure 1, will show that at least
$$|l_i\cap{P}|+|l_{i+1}\cap{P}|-1$$
lines through the origin determined by $P+P$ have a slope inside $(m_i,m_{i+1})$.

To do this, it is necessary to introduce more notation. Label the points in $l_i\cap{P}$ as $p_i^{(1)},\cdots,p_i^{(n_i)}$, where $|p_i^{(1)}|<\cdots<|p_i^{(n_i)}|$ and
$$n_i:=|l_i\cap{P}|.$$
It follows from \eqref{fact2} and \eqref{fact3} that
\begin{align*}
R\left(p_i^{(1)}+p_{i+1}^{(n_{i+1})}\right)&>R\left(p_i^{(1)}+p_{i+1}^{(n_{i+1}-1)}\right)
\\&>R\left(p_i^{(1)}+p_{i+1}^{(n_{i+1}-2)}\right)
\\&>\cdots
\\&>R\left(p_i^{(1)}+p_{i+1}^{(1)}\right)
\\&>R\left(p_i^{(2)}+p_{i+1}^{(1)}\right)
\\&>\cdots
\\&>R\left(p_i^{(n_i)}+p_{i+1}^{(1)}\right).
\end{align*}
This argument identifies $n_{i+1}+n_i-1$ elements of $P+P$, all of which determine distinct lines through the origin with slope in the open interval $(m_i,m_{i+1})$. This in turn identifies $n_{i+1}+n_i-1$ elements of the set $\frac{A+A}{A+A}$ lying in the interval $(m_i,m_{i+1})$. Summing over all $i$, it follows that

\begin{align}
\left|\frac{A+A}{A+A}\right|&\geq{\sum_{i=1}^{k-1}(n_{i+1}+n_i-1)}
\\&=2\left(\sum_{i=1}^{k}n_i\right)-n_1-n_k-(k-1)
\\&=2|A|^2-n_1-n_k-k+1 \label{eq1},
\end{align}
where \eqref{eq1} is a consequence of \eqref{obvious}. 

We observe that $n_1=n_k=1$. This is because $n_1$, for example, is the number of points in $l_1\cap{P}$; that is, the number of points from $P$ lying on the line with the most shallow gradient in our covering set of lines. The only point from $P$ on this line is the point $(a_{max},a_{min})$, where $a_{max}$ and $a_{min}$ are the largest and smallest elements of $A$ respectively. In Figure 1, this is the point in the bottom right corner of our direct product point set $P$. Similarly, the only point in $l_{k}\cap{P}$ is $(a_{min},a_{max})$. Putting this information into \eqref{eq1}, we now have

\begin{equation}
\left|\frac{A+A}{A+A}\right|\geq{2|A|^2-k-1}.
\label{almost}
\end{equation}

However, in obtaining bound \eqref{almost}, we have only taken into account elements of $\frac{A+A}{A+A}$ in the open intervals $(m_1,m_2),(m_2,m_3),\cdots,(m_{k-1},m_k)$, and not the values of $m_i$ themselves. It can quickly be verified that for all $1\leq{i}\leq{k}$, it is true that 
$$m_i\in{\frac{A+A}{A+A}}.$$ 
Indeed, for any such $i$, there exists at least one point $p=(p_1,p_2)\in{A\times{A}}$ such that $\frac{p_2}{p_1}=m_i$. Then, 
$$\frac{p_2+p_2}{p_1+p_1}=\frac{p_2}{p_1}=m_i.$$
In this way, we have identified a further $k$ elements of $\frac{A+A}{A+A}$, which were not counted in the analysis which led to bound \eqref{almost}. Therefore, we can improve slightly on \eqref{almost} in establishing that
$$\left|\frac{A+A}{A+A}\right|\geq{2|A|^2-k-1+k}=2|A|^2-1,$$
which completes the proof.
\end{proof}

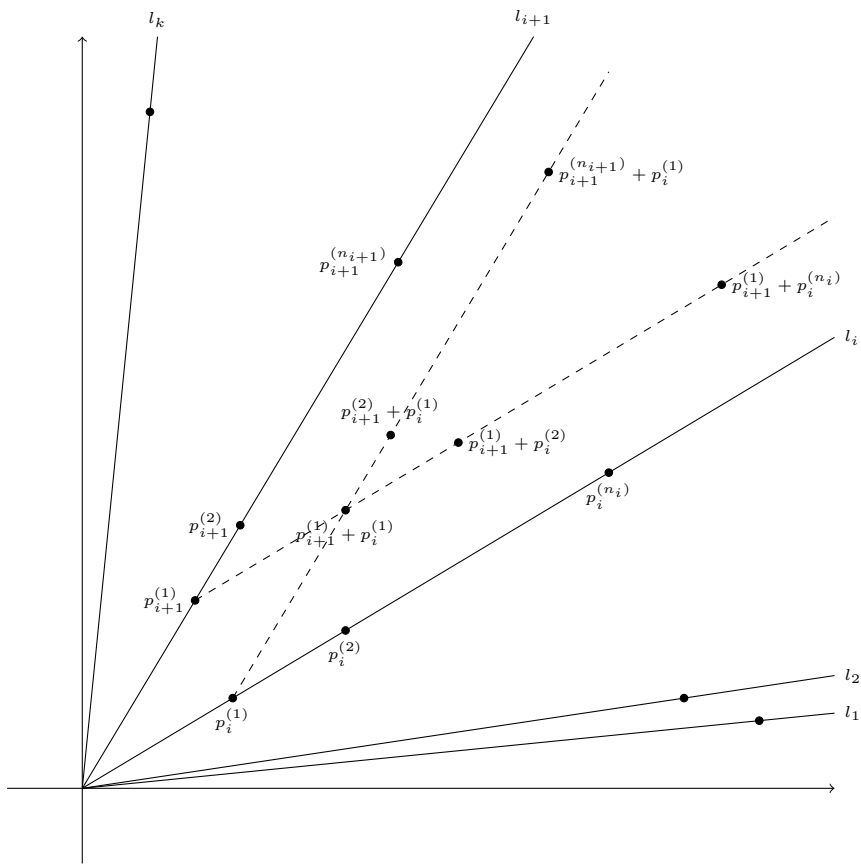
\begin{figure}[H]
\centering
\begin{tikzpicture}[font=\tiny, scale=1]

\draw [->] (0,-1) -- (0,10);
\draw [->] (-1,0) -- (10,0);
\draw (0,0) -- (10,1);
\draw (0,0) -- (10,1.5);
\draw (0,0) -- (10,6);
\draw (0,0) -- (6,10);
\draw (0,0) -- (1,10);
\draw [dashed] (2,1.2) -- (7,9.53333);
\draw [dashed] (1.5,2.5) -- (10,7.6);
\draw[fill] (9,0.9) circle [radius=0.05];
\draw[fill] (8,1.2) circle [radius=0.05];
\draw[fill] (2,1.2) circle [radius=0.05];
\draw[fill] (3.5,2.1) circle [radius=0.05];
\draw[fill] (7,4.2) circle [radius=0.05];
\draw[fill] (1.5,2.5) circle [radius=0.05];
\draw[fill] (2.1,3.5) circle [radius=0.05];
\draw[fill] (4.2,7) circle [radius=0.05];
\draw[fill] (0.9,9) circle [radius=0.05];
\draw[fill] (3.5,3.7) circle [radius=0.05];
\draw[fill] (4.1,4.7) circle [radius=0.05];
\draw[fill] (6.2,8.2) circle [radius=0.05];
\draw[fill] (5,4.6) circle [radius=0.05];
\draw[fill] (8.5,6.7) circle [radius=0.05];
\node[right] at (10,1) {$l_1$};
\node[right] at (10,1.5) {$l_2$};
\node[right] at (10,6) {$l_{i}$};
\node[above] at (6,10) {$l_{i+1}$};
\node[above] at (1,10) {$l_k$};
\node[below] at (2,1.2) {$p_i^{(1)}$};
\node[below] at (3.5,2.1) {$p_i^{(2)}$};
\node[below] at (7,4.2) {$p_i^{(n_i)}$};
\node[left] at (1.5,2.5) {$p_{i+1}^{(1)}$};
\node[left] at (2.1,3.5) {$p_{i+1}^{(2)}$};
\node[left] at (4.2,7) {$p_{i+1}^{(n_{i+1})}$};
\node[below] at (3.5,3.7) {$p_{i+1}^{(1)}+p_{i}^{(1)}$};
\node[above] at (4.1,4.7) {$p_{i+1}^{(2)}+p_{i}^{(1)}$};
\node[right] at (6.2,8.2) {$p_{i+1}^{(n_{i+1})}+p_{i}^{(1)}$};
\node[right] at (5,4.6) {$p_{i+1}^{(1)}+p_{i}^{(2)}$};
\node[right] at (8.5,6.7) {$p_{i+1}^{(1)}+p_{i}^{(n_i)}$};
\end{tikzpicture}

\caption{Illustration of the proof of Theorem \ref{main1}}

\centering

\end{figure}

Using the notation introduced in \eqref{Rdef}, we can define the set of slopes determined by a point set $P\subset{\mathbb{R}^2}$ by the notation
$$R(P):=\{R(p):p\in{P}\}.$$
In this language Theorem \ref{main1} tells us that, if $P=A\times{A}$, then 
\begin{equation}
|R(P+P)|\geq{2|P|-1}.
\label{general}
\end{equation}
It is not difficult to generalise Theorem \ref{main1} so that it holds for a point set $P$ which is not necessarily a direct product, in the form of the following result:
\begin{theorem} \label{generalise}
Let $P\subset{\mathbb{R}^2}$ be a finite point set which is not contained on a single line through the origin. Then
$$|R(P+P)|\geq{|P|+1}.$$
\end{theorem}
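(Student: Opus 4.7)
The plan is to transplant the proof of Theorem \ref{main1} to this more general setting, changing only the bookkeeping so that the two extremal lines need no longer contain a unique point. Write $R(P)=\{m_1<\cdots<m_k\}$ for the distinct slopes of the points of $P$, and set $n_i:=|\{p\in P:R(p)=m_i\}|$, so that $\sum_{i=1}^{k}n_i=|P|$ and $k\geq 2$ by the hypothesis that $P$ is not contained on a single line through the origin.

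First, I would observe that $\{m_1,\ldots,m_k\}\subset R(P+P)$, since $R(2p)=R(p)$ for every $p\in P$. Next, for each $i=1,\ldots,k-1$, I would replay the monotone-chain argument used to derive \eqref{fact2} and \eqref{fact3} on the two adjacent lines $l_i$ and $l_{i+1}$. The only properties of $P$ that the argument actually uses are the positivity of the first coordinates of points on $l_i$ and $l_{i+1}$ (inherited from the positive-coordinate setting of Theorem \ref{main1}) and the ability to totally order the points on each line by magnitude. This identifies $n_i+n_{i+1}-1$ distinct elements of $R(P+P)$ strictly inside $(m_i,m_{i+1})$, and the contributions coming from different intervals are disjoint from one another and from the $m_j$.

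Combining the two sources of slopes gives
\[
|R(P+P)|\;\geq\;k+\sum_{i=1}^{k-1}\bigl(n_i+n_{i+1}-1\bigr)\;=\;2|P|-n_1-n_k+1.
\]
Since $k\geq 2$, the extremal counts satisfy $n_1+n_k\leq|P|$, yielding $|R(P+P)|\geq|P|+1$. The main place requiring care is that the boundary terms $n_1$ and $n_k$ need no longer equal $1$, as they did in Theorem \ref{main1} thanks to the direct-product structure; however the algebraic identity above absorbs exactly this loss, with the bound becoming tight precisely when $k=2$, i.e.\ when $P$ is concentrated on two lines through the origin.
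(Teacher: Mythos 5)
Your proof is correct and follows essentially the same route as the paper's own (brief) proof: replay the chain-of-slopes argument from Theorem \ref{main1} on consecutive lines, add back the $k$ slopes $m_i$ themselves, arrive at $|R(P+P)|\geq 2|P|+1-n_1-n_k$, and then use $l_1\neq l_k$ (equivalently $k\geq 2$) to bound $n_1+n_k\leq |P|$. The only thing worth flagging is that, like the paper, you assume $P$ lies in the positive quadrant without comment; this is what makes the totally-ordered-slopes and ordering-by-magnitude arguments work, and a fully rigorous treatment would need to explain why this reduction is harmless.
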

\begin{proof}
Since the proof is very similar to that of Theorem \ref{main1}, we will give only a brief sketch. Let us assume for simplicity that all the points of $P$ are contained in the positive quadrant. Cover $P$ by a set of lines $L$, let $l_1$ the line with smallest gradient, and let $l_k$ be the steepest. Taking vector sums of points from $P$ along neighbouring lines as before gives
$$|R(P)|\geq{k+\sum_{i=1}^{k-1}(|l_i\cap{P}|+|l_{i+1}\cap{P}|-1)}=2|P|+1-|l_1\cap{P}|-|l_k\cap{P}|.$$
Since $P$ is not contained on a single line through the origin, we know that $l_1\neq{l_k}$, and therefore $|l_1\cap{P}|+|l_k\cap{P}|\leq{|P|}$. This completes the proof.
\end{proof}
Observe that this bound is optimal, as is illustrated by the case when $P$ consists of $|P|-1$ points on a line through the origin, with a single point lying away from the line.

Now we begin to move towards a proof of Theorem \ref{main2}. Apart from Theorem \ref{main1}, the other main ingredient is the following lemma from \cite{balog}:

\begin{lemma} \label{baloglemma} Let $\mathcal{A,B,C}$ and $\mathcal{D}$ be finite sets of positive real numbers. Then
$$|\mathcal{AC}+\mathcal{AD}||\mathcal{BC}+\mathcal{BD}|\geq{|\mathcal{A}/\mathcal{B}||\mathcal{C}||\mathcal{D}|}.$$
\end{lemma}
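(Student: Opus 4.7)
My plan is to prove the lemma via a combination of an injection argument and Cauchy--Schwarz, drawing on the same philosophy as the proof of Theorem \ref{main1}: read off the ratio $a/b$ from the slope of a suitable vector in $(\mathcal{AC}+\mathcal{AD})\times(\mathcal{BC}+\mathcal{BD})$. Write $U=\mathcal{AC}+\mathcal{AD}$ and $V=\mathcal{BC}+\mathcal{BD}$; the goal is to bound $|U|\cdot|V|$ below by producing many distinct pairs parameterised by $(r,c,d)\in\mathcal{A}/\mathcal{B}\times\mathcal{C}\times\mathcal{D}$.

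First I would fix, for each $r\in\mathcal{A}/\mathcal{B}$, representatives $a_r\in\mathcal{A}$ and $b_r\in\mathcal{B}$ with $a_r/b_r=r$, and consider the map
$$\Phi(r,c,d)=(a_r c+a_r d,\,b_r c+b_r d)=(a_r(c+d),\,b_r(c+d))\in U\times V.$$
Since the ratio of its two coordinates equals $1/r$, distinct values of $r$ land on different lines through the origin and the images are pairwise disjoint. This alone yields the weaker estimate $|U||V|\ge|\mathcal{A}/\mathcal{B}|\cdot|\mathcal{C}+\mathcal{D}|$. To upgrade $|\mathcal{C}+\mathcal{D}|$ to the required $|\mathcal{C}|\cdot|\mathcal{D}|$, I plan to exploit the additional flexibility of $\mathcal{AC}+\mathcal{AD}$, which contains all cross products $a c+a'd$ with possibly $a\neq a'$ (and similarly for $\mathcal{BC}+\mathcal{BD}$). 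I would apply Cauchy--Schwarz to the representation function
$$\tau(x,y)=\bigl|\{(a_1,a_2,b_1,b_2,c,d)\in\mathcal{A}^2\times\mathcal{B}^2\times\mathcal{C}\times\mathcal{D}:\,a_1c+a_2d=x,\,b_1c+b_2d=y,\,a_1/b_1=a_2/b_2\}\bigr|,$$
whose total mass is $E^{\times}(\mathcal{A},\mathcal{B})\cdot|\mathcal{C}||\mathcal{D}|$ (with $E^{\times}$ the multiplicative energy), combined with the standard lower bound $|\mathcal{A}/\mathcal{B}|\ge|\mathcal{A}|^2|\mathcal{B}|^2/E^{\times}(\mathcal{A},\mathcal{B})$, to convert the first-moment information into the desired product inequality.

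The main difficulty I anticipate is controlling the second moment $\sum_{(x,y)}\tau(x,y)^2$ by something of order $E^{\times}(\mathcal{A},\mathcal{B})^2\cdot|\mathcal{C}||\mathcal{D}|/|\mathcal{A}/\mathcal{B}|$. Slicing by $r\in\mathcal{A}/\mathcal{B}$ reduces this to an additive energy of $B_r\mathcal{C}$ with $B_r\mathcal{D}$, where $B_r=\mathcal{B}\cap r^{-1}\mathcal{A}$ has size $n(r)$; managing this energy uniformly in $r$ while summing it against the weight $n(r)^2$ to recover a clean form is the technical crux. Once this estimate is in place, the Cauchy--Schwarz sandwich $(\sum\tau)^2\le|U||V|\sum\tau^2$ closes the argument and produces the claimed $|U||V|\ge|\mathcal{A}/\mathcal{B}||\mathcal{C}||\mathcal{D}|$.
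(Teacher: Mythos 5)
Your plan has a genuine gap, and it appears right where you flag it. The map $\Phi(r,c,d)=(a_r(c+d),\,b_r(c+d))$ pins both coordinates to a single representative pair for $r$, so you only ever see $|\mathcal{C}+\mathcal{D}|$ values per line, and you then try to recover the missing factor $|\mathcal{C}||\mathcal{D}|/|\mathcal{C}+\mathcal{D}|$ via a first-moment/second-moment (Cauchy--Schwarz) argument. That cannot work: the second-moment bound you call the ``technical crux'' is simply false. Take $\mathcal{A}=\mathcal{B}=\{1\}$ and $\mathcal{C}=\mathcal{D}=\{1,\dots,N\}$. Then $E^{\times}(\mathcal{A},\mathcal{B})=|\mathcal{A}/\mathcal{B}|=1$, your $\tau$ degenerates to $\tau(x,x)=|\{(c,d)\in\mathcal{C}\times\mathcal{D}:c+d=x\}|$, and $\sum_{(x,y)}\tau(x,y)^2=E_{+}(\mathcal{C},\mathcal{D})\asymp N^3$, which vastly exceeds the $E^{\times}(\mathcal{A},\mathcal{B})^2|\mathcal{C}||\mathcal{D}|/|\mathcal{A}/\mathcal{B}|=N^2$ you would need. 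The Cauchy--Schwarz sandwich then returns only $|U||V|\gtrsim N^4/N^3=N$, while the lemma asserts $N^2$. The lemma is still true in this example ($|\mathcal{C}+\mathcal{D}|^2\geq N^2$), but this is invisible to an energy argument, because the representations are maximally concentrated --- exactly the regime in which Cauchy--Schwarz gives away everything. So the obstacle is not ``technical'': the approach loses by a full power and has to be abandoned, not patched.

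The actual proof (from \cite{balog}, and mirrored in the complex Lemma~\ref{baloglemma2} of this paper, whose argument you could have transcribed back to $\mathbb{R}$) involves no energy at all; it is a direct injection in the spirit of Theorem~\ref{main1}. Order the ratios $l_1<\dots<l_m$ of $\mathcal{B}/\mathcal{A}$, fix for each $i$ a representative $(a_i,b_i)\in\mathcal{A}\times\mathcal{B}$ with $b_i/a_i=l_i$, and for each $i$ consider
$$(c,d)\longmapsto\bigl(a_ic+a_{i+1}d,\;b_ic+b_{i+1}d\bigr)\in(\mathcal{AC}+\mathcal{AD})\times(\mathcal{BC}+\mathcal{BD}).$$
Because $l_i\neq l_{i+1}$, the $2\times 2$ matrix with rows $(a_i,a_{i+1})$ and $(b_i,b_{i+1})$ is invertible, so for fixed $i$ this map is injective and its image has exactly $|\mathcal{C}||\mathcal{D}|$ elements. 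Moreover, writing the slope of the image point as a convex combination of $l_i$ and $l_{i+1}$ (as in \eqref{segment}, using positivity of $a_ic$ and $a_{i+1}d$) shows it lies strictly between $l_i$ and $l_{i+1}$, so the images for different $i$ sit in disjoint slope intervals and are pairwise disjoint. Summing over the consecutive pairs gives the bound. The key idea you are missing is to pair two \emph{different} ratios --- not the same ratio twice --- which is what buys the full $|\mathcal{C}||\mathcal{D}|$ per slice and turns the whole proof into a clean injection rather than an averaging argument.
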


The proof of Lemma \ref{baloglemma} uses only elementary geometry and it is similar in spirit to the proof of Theorem \ref{main1} and the earlier work of Solymosi. Later on, we will seek to prove a version of Theorem \ref{main2} which holds for the complex numbers, and the most difficult step in this process is to extend Lemma \ref{baloglemma} to the complex case. Full details of how this generalisation works will be given in Section 3.

\begin{theorem} \label{main2} Let $A$ be a finite set of positive real numbers and let $k\geq{1}$ be an integer. Then
\begin{equation}
|4^{k-1}A^{(k)}|\geq{|A|^k}.
\label{target}
\end{equation}
\end{theorem}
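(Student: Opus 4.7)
The plan is to prove Theorem \ref{main2} by induction on $k$, with Theorem \ref{main1} and Lemma \ref{baloglemma} as the two main ingredients. The base case $k=1$ is trivial, since the statement reduces to $|A| \geq |A|$. For the inductive step, I assume the bound for $k$ and set
$$B := 4^{k-1}A^{(k)}, \qquad |B| \geq |A|^k.$$
The key structural observation is that the jump from $4^{k-1}$ to $4^k$ corresponds precisely to the four terms produced by Lemma \ref{baloglemma}. Indeed, since every element of $B$ is a sum of $4^{k-1}$ products of $k$ elements of $A$, distributing a multiplication by any $a \in A$ yields $AB \subseteq 4^{k-1}A^{(k+1)}$, and therefore
$$AB+AB+AB+AB \;\subseteq\; 4^{k}A^{(k+1)}.$$

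Next I apply Lemma \ref{baloglemma} with $\mathcal{A}=\mathcal{B}=A+A$ and $\mathcal{C}=\mathcal{D}=B$; all four sets consist of positive reals, so the hypotheses are met. This gives
$$\bigl|(A+A)B+(A+A)B\bigr|^{2} \;\geq\; \left|\frac{A+A}{A+A}\right||B|^{2} \;\geq\; (2|A|^{2}-1)|A|^{2k},$$
where the second inequality uses Theorem \ref{main1} and the inductive hypothesis. The distributive identity $(a_1+a_2)b = a_1 b + a_2 b$ shows that $(A+A)B+(A+A)B \subseteq AB+AB+AB+AB$, so combining the two containments above I conclude
$$\bigl|4^{k}A^{(k+1)}\bigr|^{2} \;\geq\; |AB+AB+AB+AB|^{2} \;\geq\; (2|A|^{2}-1)|A|^{2k} \;\geq\; |A|^{2(k+1)},$$
completing the induction.

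The argument is essentially routine once Theorem \ref{main1} and Lemma \ref{baloglemma} are available; the only point requiring a little care is the bookkeeping on the factor $4^{k-1}$, i.e.\ packaging the previous $k$-fold sum-product set into the ``middle'' variables $\mathcal{C},\mathcal{D}$ of Lemma \ref{baloglemma} rather than into $\mathcal{A},\mathcal{B}$, so that the inductive gain $|B|\geq |A|^k$ is combined multiplicatively with the factor $2|A|^2-1$ coming from Theorem \ref{main1}. Apart from that, one only needs to verify that $2|A|^2-1 \geq |A|^2$ for $|A|\geq 1$, which handles the trivial squaring issue, and to check the set containment $AB+AB+AB+AB \subseteq 4^{k}A^{(k+1)}$; both are immediate.
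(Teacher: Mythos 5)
Your proof is correct and is essentially identical to the paper's own argument: both proceed by induction using Lemma \ref{baloglemma} with $\mathcal{A}=\mathcal{B}=A+A$ and $\mathcal{C}=\mathcal{D}$ set to the previous iterate $4^{k-1}A^{(k)}$, together with the crude bound $|\frac{A+A}{A+A}|\geq 2|A|^2-1\geq|A|^2$ from Theorem \ref{main1} and the containment $(A+A)\cdot 4^{k-1}A^{(k)}+(A+A)\cdot 4^{k-1}A^{(k)}\subseteq 4^{k}A^{(k+1)}$. The only cosmetic difference is that you index the induction from $k$ to $k+1$ while the paper goes from $k-1$ to $k$.
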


\begin{proof} The proof is by induction on $k$. In the base case when $k=1$, this is just the trivial statement that $|A|\geq{|A|}$. Now suppose that \eqref{target} holds for $k-1$; that is
\begin{equation}
|4^{k-2}A^{(k-1)}|\geq{|A|^{k-1}}.
\label{ih}
\end{equation}
Apply Lemma \ref{baloglemma} with $\mathcal{A}=\mathcal{B}=A+A$ and $\mathcal{C}=\mathcal{D}=4^{k-2}A^{(k-1)}$. By the inductive hypothesis \eqref{ih}, we obtain
\begin{equation}
|(A+A)4^{k-2}A^{(k-1)}+(A+A)4^{k-2}A^{(k-1)}|^2\geq{\left|\frac{A+A}{A+A}\right||A|^{2k-2}}.
\label{lb1}
\end{equation}
A crude application of Theorem \ref{main1} gives
\begin{equation}
\left|\frac{A+A}{A+A}\right|\geq{2|A|^2-1}\geq{|A|^2}.
\label{crude}
\end{equation}
Applying \eqref{crude}, it follows that
\begin{equation}
|(A+A)4^{k-2}A^{(k-1)}+(A+A)4^{k-2}A^{(k-1)}|^2\geq{|A|^{2k}}.
\label{lb2}
\end{equation}
On the other hand
\begin{align*}
(A+A)4^{k-2}A^{(k-1)}+(A+A)4^{k-2}A^{(k-1)}&\subseteq{4^{k-2}A^{(k)}+4^{k-2}A^{(k)}+4^{k-2}A^{(k)}+4^{k-2}A^{(k)}}
\\&=4^{k-1}A^{(k)},
\end{align*}
and this can be combined with \eqref{lb2}, in order to conclude that
$$|4^{k-1}A^{(k)}|\geq{|A|^k}.$$
\end{proof}
Theorem \ref{main2} is tight in the sense that the right hand side of \eqref{target} can only be improved by a constant factor which may depend on $k$. On the other hand, it seems likely that this bound could be improved significantly on the left hand side, and that there is scope for the value $4^{k-1}$ to be replaced by something much smaller. To this end, we make the following conjecture:
\begin{conjecture} Let $A$ be a finite set of real numbers and let $k\geq{1}$ be an integer. Then
$$|kA^{(k)}|\gg{|A|^k}.$$
\end{conjecture}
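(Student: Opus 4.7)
The plan is to attempt a proof by induction on $k$, keeping the architecture of Theorem \ref{main2} but arranging to add only $O(1)$ new copies of $A^{(k)}$ per inductive step rather than a factor of four. Since the conjecture allows the implied constant to depend on $k$, reducing the number of summands from $4^{k-1}$ to a linear function of $k$ suffices. The base case $k=1$ is trivial, and the substantive base is $k=2$: one must first establish $|AA+AA|\gg|A|^2$, after which the induction can in principle carry the bound upward.

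The factor of four per level in Theorem \ref{main2} is forced by the containment $(A+A)B+(A+A)B\subseteq 4AB$ with $B=4^{k-2}A^{(k-1)}$; the set $A+A$ appears in the first slot precisely in order to invoke Theorem \ref{main1} and extract the quadratic lower bound $|(A+A)/(A+A)|\geq 2|A|^2-1$. My first attempt would be to prove a strengthening of Lemma \ref{baloglemma} of the form
\[
|\mathcal{AC}+\mathcal{AD}|\cdot|\mathcal{BC}+\mathcal{BD}|\gg\left|\frac{\mathcal{A}+\mathcal{A}}{\mathcal{B}+\mathcal{B}}\right|\cdot|\mathcal{C}|\,|\mathcal{D}|,
\]
obtained by rerunning the Solymosi-style count behind Lemma \ref{baloglemma} after grouping points of the plane along lines through the origin whose slopes come from the finer set $\frac{\mathcal{A}+\mathcal{A}}{\mathcal{B}+\mathcal{B}}$ rather than $\mathcal{A}/\mathcal{B}$. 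Setting $\mathcal{A}=\mathcal{B}=A$ and $\mathcal{C}=\mathcal{D}=(k-1)A^{(k-1)}$, and invoking the inductive hypothesis on $(k-1)A^{(k-1)}$, would then yield $|2(k-1)A^{(k)}|\gg|A|^k$, already replacing $4^{k-1}$ by $2k-2$ and giving a weak form of the conjecture.

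The main obstacle is that the proposed strengthening of Lemma \ref{baloglemma} would immediately imply $|AA+AA|\gg|A|^2$ (take $\mathcal{A}=\mathcal{B}=\mathcal{C}=\mathcal{D}=A$), an open sum-product-type problem that does not appear to be accessible by planar incidence methods of the Solymosi flavour alone. I would expect its resolution to require a genuinely new ingredient, most plausibly a Guth-Katz-style point-line incidence bound applied to an Elekes-type parametrisation of the equation $a_1b_1+a_2b_2=c_1d_1+c_2d_2$. Closing the residual gap between $2(k-1)$ and the conjectured $k$ summands would likely require a further refinement, perhaps a higher-dimensional incidence argument applied directly to $A^{\times k}\subset\mathbb{R}^k$, so I would expect a complete proof of the conjecture to proceed in two stages: first settling $k=2$, and then propagating upward by an induction of the type sketched above, possibly at the cost of a suboptimal constant factor in the number of summands.
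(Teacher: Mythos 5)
This statement is posed in the paper as an open \emph{conjecture}: no proof is given or claimed, and none is known. There is therefore no paper argument to compare against, only the question of whether your roadmap is sound.

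Your assessment is essentially correct, and you rightly do not claim to have a proof. The substantive obstruction is indeed already at $k=2$, where the conjecture reads $|AA+AA|\gg|A|^2$, a well-known open sum-product problem not accessible by the Solymosi-style slope counting that underlies Lemma \ref{baloglemma} and Theorem \ref{main1}. Your proposed strengthening of Lemma \ref{baloglemma}, replacing $|\mathcal{A}/\mathcal{B}|$ by $\left|\frac{\mathcal{A}+\mathcal{A}}{\mathcal{B}+\mathcal{B}}\right|$, would immediately settle this upon specialising $\mathcal{A}=\mathcal{B}=\mathcal{C}=\mathcal{D}=A$ and invoking Theorem \ref{main1}, so it is at least as hard as the open problem; and the reason the geometric argument does not adapt is that the Solymosi-style construction organises the relevant product point set along lines through the origin with slopes in $\mathcal{B}/\mathcal{A}$, and there is no comparable disjointness structure available for the much finer slope set $(\mathcal{B}+\mathcal{B})/(\mathcal{A}+\mathcal{A})$. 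One small correction to the framing: reducing the summand count from $4^{k-1}$ to a linear function of $k$ does \emph{not} ``suffice'' for the conjecture as stated, since it asks for exactly $k$ summands and a lower bound on $|2(k-1)A^{(k)}|$ gives no lower bound on $|kA^{(k)}|$; you in fact note this residual gap yourself at the end. In short, the proposal contains no logical errors, but it is a plan contingent on resolving an open problem rather than a proof, exactly as you acknowledge, and that is precisely why the paper leaves the statement as a conjecture.
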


To conclude this section, let us observe another application of Lemma \ref{baloglemma}.
\begin{corollary}
For any finite set $A$ of real numbers,
$$|(A+A)(A+A)(A+A)+(A+A)(A+A)(A+A)|\gg{\frac{|A|^3}{\log{|A|}}}.$$
\end{corollary}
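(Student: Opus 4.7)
The plan is to apply Lemma \ref{baloglemma} directly to a carefully chosen configuration of sets, and then feed in Theorem \ref{main1} together with the earlier bound \eqref{rect} from \cite{rectangles}. Specifically, I would set
$$\mathcal{A}=\mathcal{B}=A+A \qquad \text{and} \qquad \mathcal{C}=\mathcal{D}=(A+A)(A+A),$$
so that $\mathcal{AC}+\mathcal{AD}=\mathcal{BC}+\mathcal{BD}=(A+A)(A+A)(A+A)+(A+A)(A+A)(A+A)$. Lemma \ref{baloglemma} then yields
$$|(A+A)(A+A)(A+A)+(A+A)(A+A)(A+A)|^2 \geq \left|\frac{A+A}{A+A}\right|\,|(A+A)(A+A)|^2.$$

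The next step is to substitute known bounds on the two factors on the right. For the first factor, Theorem \ref{main1} gives $|(A+A)/(A+A)| \geq 2|A|^2-1 \gg |A|^2$. For the second factor, the bound \eqref{rect} from \cite{rectangles} gives $|(A+A)(A+A)| \gg |A|^2/\log|A|$. Plugging these in produces
$$|(A+A)(A+A)(A+A)+(A+A)(A+A)(A+A)|^2 \gg \frac{|A|^6}{\log^2|A|},$$
and the corollary follows by taking square roots.

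The only point requiring mild care is the positivity hypothesis in Lemma \ref{baloglemma}: one should assume $A\subset\mathbb{R}_{>0}$, or alternatively note that by restricting to a subset of $A$ of one sign (and possibly removing $0$) of cardinality $\geq |A|/2$, one can reduce the general case to the positive case at the cost of the implied constant. This is the only step that demands attention; beyond it, the proof is a one-line application of Lemma \ref{baloglemma} combined with two inputs already proved in the paper, so there is no substantial obstacle.
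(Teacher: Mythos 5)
Your proof is correct and matches the paper's own argument exactly: apply Lemma \ref{baloglemma} with $\mathcal{A}=\mathcal{B}=A+A$ and $\mathcal{C}=\mathcal{D}=(A+A)(A+A)$, then substitute Theorem \ref{main1} and the bound $|(A+A)(A+A)|\gg |A|^2/\log|A|$ from \cite{rectangles}. Your closing remark about reducing to the positive case is a sensible precaution that the paper itself leaves implicit, so it adds rather than detracts.
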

\begin{proof} Apply Lemma \ref{baloglemma} with $\mathcal{A}=\mathcal{B}=A+A$ and $\mathcal{C}=\mathcal{D}=(A+A)(A+A)$. This yields
\begin{equation}
|(A+A)(A+A)(A+A)+(A+A)(A+A)(A+A)|\gg{\left|\frac{A+A}{A+A}\right|^{1/2}|(A+A)(A+A)|}.
\label{lb3}
\end{equation}
As mentioned in the introduction, it was proven in \cite{rectangles} that
\begin{equation}
|(A+A)(A+A)|\gg{\frac{|A|^2}{\log{|A|}}}.
\label{misha}
\end{equation}
Theorem \ref{main1} and bound \eqref{misha} can be applied to bound the right hand side of \eqref{lb3} from below. It follows that
$$|(A+A)(A+A)(A+A)+(A+A)(A+A)(A+A)|\gg{\frac{|A|^3}{\log{|A|}}},$$
as required.
\end{proof}

This bound is tight up to constant and logarithmic factors. Indeed, it can be viewed as a weaker version of Theorem \ref{main2} in the case when $k=3$. Once again, it is the case when $A=\{1,\cdots,N\}$ which shows that this bound is close to being tight. It seems likely that a stronger estimate could be obtained using less variables. To be more precise, we make the following conjecture:

\begin{conjecture} Let $A\subset{\mathbb{R}}$ be a finite set. Then, for all $\epsilon>0$,
$$|(A+A)(A+A)(A+A)|\gg{|A|^{3-\epsilon}}.$$
\end{conjecture}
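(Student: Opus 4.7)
The plan is to adapt the Cauchy--Schwarz strategy of \cite{rectangles} that produced \eqref{rect}, now applied to triples (rather than pairs) of sum sets. Setting $S=A+A$ and letting $T$ denote the number of $12$-tuples in $A^{12}$ satisfying
$$(a_1+a_2)(a_3+a_4)(a_5+a_6)=(a_7+a_8)(a_9+a_{10})(a_{11}+a_{12}),$$
Cauchy--Schwarz applied to the representation function that counts how often an element of $SSS$ arises as such a triple product yields $|SSS|\geq |A|^{12}/T$. The conjecture would therefore follow from the multiplicative energy bound $T\ll |A|^{9+\epsilon}$.

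The first step in attacking $T$ is to freeze two of the six sum factors, say $\sigma=a_5+a_6$ and $\tau=a_{11}+a_{12}$, which together take at most $|A|^{4}$ values. For each such pair the remaining equation is a scaled version of \eqref{solutions},
$$(a_1+a_2)(a_3+a_4)=(\tau/\sigma)(a_7+a_8)(a_9+a_{10}),$$
to which the $O(|A|^{6}\log|A|)$ bound of \cite{rectangles} ought to extend with essentially no dependence on the scalar. Summing crudely over $(\sigma,\tau)$ only yields $T\ll |A|^{10+\epsilon}$, one factor of $|A|$ short of the target. A genuine proof would have to exploit cancellation across the different scalars $\tau/\sigma$, or else treat the whole six-term equation as a single algebraic relation and count its $A$-rational points directly.

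The main obstacle, and the reason the conjecture remains open, is that the variety naturally associated with the six-term equation is a higher-dimensional hypersurface, whereas the Guth--Katz line-incidence machinery powering \cite{rectangles} is tailored to four-term equations whose level sets project to families of lines in $\mathbb{R}^{3}$. A serious attack therefore seems to require either a new higher-dimensional incidence theorem for such hypersurfaces, an Elekes--Szab\'o-type rigidity argument adapted to them, or a structural identity that projects the problem back down to a regime where Szemer\'edi--Trotter applies. A more realistic intermediate target would be the logarithm-loss version $|(A+A)(A+A)(A+A)|\gg |A|^{3}/\log^{c}|A|$, which might be obtainable by combining the Corollary above with an inductive Pl\"unnecke--Ruzsa covering argument.
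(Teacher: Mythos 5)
This statement is labelled a \emph{Conjecture} in the paper, and the authors give no proof of it; there is therefore nothing in the paper to compare your write-up against. You have, appropriately, not claimed to prove it either: your text is a discussion of strategy and obstacles rather than an argument. That assessment is basically accurate. The Cauchy--Schwarz reduction is correct: with $r(x)$ the number of $12$-tuples representing $x$, one has $\sum_x r(x)=|A|^{6}$ and $\sum_x r(x)^2=T$, so $|SSS|\geq|A|^{12}/T$, and the conjecture would indeed follow from $T\ll|A|^{9+\epsilon}$. Your freezing argument and the resulting $|A|^{10+\epsilon}$ ceiling, one power of $|A|$ short, is also a fair summary of where a naive extension of the method of \cite{rectangles} stalls, and you are right that this is why the statement remains open.

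Two cautions on the speculative remarks at the end. First, the claim that the $O(|A|^{6}\log|A|)$ count for \eqref{solutions} extends with a free scalar $\lambda$ on one side is plausible, since the Guth--Katz incidence input in \cite{rectangles} is insensitive to an affine rescaling of the line family, but it is not something you can simply assert; a careful reader would want to see the reduction spelled out. Second, the suggestion that the Corollary together with a Pl\"unnecke--Ruzsa covering argument might yield $|(A+A)(A+A)(A+A)|\gg|A|^{3}/\log^{c}|A|$ does not hold up as stated. Writing $X=(A+A)(A+A)(A+A)$, the Corollary gives $|X+X|\gg|A|^{3}/\log|A|$, and the only unconditional consequence of that for $|X|$ is the trivial $|X|\geq|X+X|^{1/2}\gg|A|^{3/2}/\log^{1/2}|A|$, which is far from cubic. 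Pl\"unnecke--Ruzsa controls iterated sumsets of a set with small doubling; here there is no a priori doubling hypothesis on $X$, so the implication runs in the wrong direction. Any route through the Corollary would have to supply additional multiplicative structure on $X$ that is not presently available.
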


\section{The Complex Case}
In this section, we seek to extend Theorems \ref{main1} and \ref{main2} to the case when $A\subset{\mathbb{C}}$. Konyagin and Rudnev \cite{KR} generalised the result of Solymosi, from \cite{solymosi}, in order to prove that
\begin{equation}
|A+A|^2|A\cdot{A}|\gg{\frac{|A|^4}{\log{|A|}}},
\label{krmain}
\end{equation}
in this setting. In particular, they generalised the geometric setup of Solymosi. Recall that we used this setup in the proof of Theorem \ref{main1} to arrange the lines through the origin in increasing order of their gradient. Elementary geometry told us that the vector sums of point from neighbouring lines would lie in between the two lines, which allowed us to sum along neighbouring lines without overcounting the quantity we were interested in. An ingenious argument from \cite{KR} gives a suitable analogy for this construction over $\mathbb{C}$. The essential step is the following claim, which we repeat here using the same notation as in \cite{KR}:

\begin{claim} \label{mainclaim1}
Let $A$ be a subset of $\mathbb{C}\setminus{\{0\}}$ which is contained inside an angular sector $S:=\{z\in{\mathbb{C}}:|arg(z)|<\epsilon\}$, where $\epsilon>0$ is a fixed value which is sufficiently small for the argument to work ($\epsilon$ does not depend on $A$), and let $l_1$ and $l_2$ be two distinct elements of the ratio set $A/A\subset{\mathbb{C}}$. Fix representations $l_1=\frac{p_2}{p_1}$ and $l_2=\frac{q_2}{q_1}$, where $p_1,p_2,q_1,q_2\in{A}$. We can view $l_1$ and $l_2$ as points in $\mathbb{R}^2$. Then, the point $z=\frac{p_2+q_2}{p_1+q_1}$ lies in an open set $M_{(l_1,l_2)}$, which is symmetric about the open line interval $(l_1,l_2)$ connecting $l_1$ and $l_2$ and also contains $(l_1,l_2)$. 

Furthermore, the ratio set $A/A$ can be viewed as the vertex set in $\mathbb{R}^2$ for a geometric graph. Let $T$ to be a minimal spanning tree for these vertices; that is $T$ is a spanning tree on $A/A$ with the property that there is no spanning tree $T'$ such that
$$\sum_{(l_1,l_2)\in{E(T')}}|l_1-l_2|<\sum_{(l_1,l_2)\in{E(T)}}|l_1-l_2|,$$
where $|l_1-l_2|$ denotes the Euclidean distance between the points $l_1$ and $l_2$. Then the sets $M_{(l_1,l_2)}$, such that $(l_1,l_2)\in{E(T)}$, are pairwise disjoint.
\end{claim}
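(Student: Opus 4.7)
The first half of the claim is a direct computation with a complex ``mediant''. Writing $t:=p_1/(p_1+q_1)$, one has
$$z=\frac{p_2+q_2}{p_1+q_1}=\frac{p_1\,l_1+q_1\,l_2}{p_1+q_1}=l_2+t(l_1-l_2),$$
so $z$ is the affine image of $t$ under $\tau\mapsto l_2+\tau(l_1-l_2)$. Setting $p_1=r_1 e^{i\alpha}$, $q_1=r_2 e^{i\beta}$ with $|\alpha|,|\beta|<\epsilon$, a routine calculation gives $t=r_1/(r_1+r_2 e^{i(\beta-\alpha)})$, whose real part is close to $r_1/(r_1+r_2)\in(0,1)$ and whose imaginary part is $O(\epsilon)$. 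As $p_1,q_1$ range over $A\cap S$, the parameter $t$ therefore sweeps out a set $T_\epsilon\subset\mathbb{C}$ which (i) sits in an $O(\epsilon)$-tube about the real interval $(0,1)$, (ii) contains $(0,1)$ (take $\alpha=\beta$), and (iii) is invariant under complex conjugation (simultaneously flip the signs of $\alpha$ and $\beta$). Define $M_{(l_1,l_2)}:=\{l_2+\tau(l_1-l_2):\tau\in T_\epsilon\}$. Since multiplication by $i$ is rotation by $\pi/2$, the affine map above carries conjugation of $\tau$ to reflection about the line through $l_1$ and $l_2$, so $M_{(l_1,l_2)}$ is symmetric about that line, contains the open segment $(l_1,l_2)$, and is a ``lens'' of half-thickness $O(\epsilon\,|l_1-l_2|)$.

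For the disjointness statement, the plan is to argue by contradiction using two standard properties of the Euclidean minimum spanning tree $T$. (A) \emph{The $60^{\circ}$ angle property}: at any vertex $v$ of $T$ the angle between any two incident MST edges is at least $60^{\circ}$; otherwise the law of cosines produces a chord shorter than one of those edges, permitting a swap that decreases total weight. (B) \emph{The cut property}: removing an edge $e=(l_1,l_2)$ partitions $T$ into components $V_1\ni l_1$, $V_2\ni l_2$, and $|l_1-l_2|\le|v-w|$ for every $v\in V_1$, $w\in V_2$. Now suppose $M_e\cap M_{e'}\ne\emptyset$ for two distinct edges $e=(l_1,l_2)$ and $e'=(l_3,l_4)$ of $T$. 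If $e$ and $e'$ share a vertex, property (A), combined with the $O(\epsilon|e|)$ and $O(\epsilon|e'|)$ thickness bounds on the two lenses, forces $\epsilon$ to exceed an absolute constant; choosing $\epsilon$ smaller rules this case out. If $e$ and $e'$ are vertex-disjoint, the thin-lens bound forces the segments $[l_1,l_2]$ and $[l_3,l_4]$ to lie within $O(\epsilon)(|e|+|e'|)$ of each other; a short planar argument then produces a cross-edge $e''$ joining some $l_i\in\{l_1,l_2\}$ to some $l_j\in\{l_3,l_4\}$, crossing the cut defined by the longer of $e,e'$ and having $|e''|<\max(|e|,|e'|)$. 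Swapping $e''$ for that longer edge yields a spanning tree of strictly smaller weight, contradicting (B).

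The main obstacle is a quantitative one: the aperture $\epsilon$ must be chosen small enough, \emph{independently of $A$}, so that the $O(\epsilon)$ thickness of the lens $M_{(l_1,l_2)}$ is strictly beaten by the constants appearing in both the $60^{\circ}$ angle estimate and the vertex-disjoint swap argument. Nailing down this uniform threshold is really the heart of the Konyagin--Rudnev construction, and is the step I would allocate the most care to; the rest of the proof is (as indicated above) either an elementary mediant computation or a bookkeeping exercise with the standard properties of the Euclidean MST.
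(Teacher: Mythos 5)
Be aware at the outset that the paper does \emph{not} give a proof of Claim~\ref{mainclaim1}: it is quoted verbatim from Konyagin and Rudnev \cite{KR}. The only construction details the authors reproduce appear inside the proof of the adapted Claim~\ref{mainclaim2}, where the lens is built as $M_{(l_i,l_j)}=\{l_i+(l_j-l_i)M_{2\epsilon}\}$, with $M_{2\epsilon}$ the image of the sector $\{|\arg w|<2\epsilon\}$ under the M\"obius map $w\mapsto w/(1+w)$; and for the symmetry and, crucially, the pairwise disjointness of the lenses along MST edges, the paper explicitly writes ``we refer the reader to [KR] for a full account.'' So there is no in-paper proof to compare against for the second half of the claim.

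Your construction of the lens is correct and is the same as the paper's in disguise: with $u=q_1/p_1$ one has $t=p_1/(p_1+q_1)=1/(1+u)$ and $u/(1+u)=1-t$, so your affine parametrisation $z=l_2+t(l_1-l_2)$ is exactly the paper's $z=l_1+(l_2-l_1)\,u/(1+u)$. Two small corrections are worth making, though. First, $T_\epsilon$ should be defined as the full M\"obius image $\{1/(1+w):|\arg w|<2\epsilon\}$, not as the set of values actually attained by $t$ as $p_1,q_1$ range over the finite set $A$ (the latter is finite and cannot ``contain $(0,1)$''). Second, the lens is not an $O(\epsilon)$-tube about $(0,1)$; it pinches to a point at both endpoints, with angular aperture $4\epsilon$ there. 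This pinching is not cosmetic: it is precisely what makes the shared-vertex case of disjointness work, since two lenses emanating from a common vertex at angle $\ge 60^\circ$ occupy disjoint angular sectors of aperture $4\epsilon<60^\circ$ at that vertex.

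For the disjointness itself, your sketch via the $60^\circ$ angle property and the cut property of the Euclidean MST is a plausible route, but as you candidly note it is not a proof: the vertex-disjoint case is waved at (``a short planar argument then produces a cross-edge $e''$ \dots with $|e''|<\max(|e|,|e'|)$''), and the uniform choice of $\epsilon$ is deferred. That is exactly the nontrivial content of the Konyagin--Rudnev lemma, so as it stands the proposal leaves the same gap the paper does; to turn it into a proof you would need to carry out the cross-edge construction quantitatively, verifying that an intersection of two lenses of vertex-disjoint edges forces some pair $l_i\in\{l_1,l_2\}$, $l_j\in\{l_3,l_4\}$ to be closer than $\max(|e|,|e'|)$ \emph{and} to lie on opposite sides of the cut determined by the longer edge, with all constants beaten by a fixed choice of $\epsilon$. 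Until that is done one should, as the paper does, simply cite \cite{KR}.
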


The sets $M_{(l_1,l_2)}$ are constructed explicitly in the proof of Claim \ref{mainclaim1}; we will also give an explicit construction of these sets as part of the proof of the forthcoming Lemma \ref{baloglemma2}. To illustrate how this claim generalises Solymosi's geometric argument, it is pointed out in \cite{KR} that if $A$ is a set of positive reals, then the spanning tree $T$ is just a straight path along the real axis, and the sets $M_{(l_1,l_2)}$ correspond to open line segments between neighbouring points in $A$.

Along with the proof of Theorem \ref{main1}, the proof of Lemma \ref{baloglemma} is based upon the geometric construction of Solymosi, and so the work of Konyagin and Rudnev provides a convenient framework for which to extend our results to the complex setting. We start this task by proving the folllowing Theorem:

\begin{theorem} \label{main3}
If $A\subset{\mathbb{C}}$ is finite then
$$\left|\frac{A+A}{A+A}\right|\gg{|A|^2}.$$
\end{theorem}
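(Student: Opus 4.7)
The plan is to adapt the proof of Theorem~\ref{main1} to the complex setting, with the Konyagin--Rudnev minimum spanning tree of Claim~\ref{mainclaim1} taking the place of the linear ordering of slopes. First I would carry out an angular sector reduction: fix the constant $\epsilon>0$ from Claim~\ref{mainclaim1}, partition $\mathbb{C}\setminus\{0\}$ into $O(1/\epsilon)$ sectors of angular width $\epsilon$, and by pigeonhole extract $A'\subseteq A\setminus\{0\}$ lying in a single such sector with $|A'|\gg|A|$. Since $\left|\frac{A+A}{A+A}\right|\geq\left|\frac{A'+A'}{A'+A'}\right|$, it suffices to prove $\left|\frac{A'+A'}{A'+A'}\right|\gg|A'|^2$, so I may relabel and assume throughout that $A$ itself lies in a narrow sector.

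Now set $k=|A/A|$, and for each $m\in A/A$ let $n_m=|\{(b,a)\in A\times A: a=mb\}|$, so that $\sum_m n_m=|A|^2$. Viewing $A/A$ as a point set in $\mathbb{C}\cong\mathbb{R}^2$, let $T$ be a minimum spanning tree as in Claim~\ref{mainclaim1}. The claim hands us, for each edge $(l_1,l_2)\in E(T)$, an open set $M_{(l_1,l_2)}$ such that (i) every ratio $\frac{p_2+q_2}{p_1+q_1}$ with $p_2/p_1=l_1$ and $q_2/q_1=l_2$ lies in $M_{(l_1,l_2)}$, and (ii) the sets $M_{(l_1,l_2)}$ are pairwise disjoint across distinct tree edges. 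Property (ii) is the complex analogue of the observation in the proof of Theorem~\ref{main1} that ratios formed between adjacent slopes cannot collide with ratios formed between a different adjacent pair.

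The counting core is as follows. For a fixed edge $(l_1,l_2)\in E(T)$, fix a representation $(p_1,p_2)=(p_1,l_1p_1)$ of $l_1$ and let $q_1$ range over $\{c\in A: l_2 c\in A\}$. The algebraic identity
\[ (l_1p_1+l_2q_1)(p_1+q_1')-(l_1p_1+l_2q_1')(p_1+q_1)=p_1(q_1'-q_1)(l_1-l_2), \]
together with $p_1\neq 0$ and $l_1\neq l_2$, shows that distinct choices of $q_1$ produce distinct ratios $\frac{l_1p_1+l_2q_1}{p_1+q_1}$. Hence we obtain $n_{l_2}$ distinct elements of $\frac{A+A}{A+A}$ inside $M_{(l_1,l_2)}$; symmetrically we could produce $n_{l_1}$ distinct elements, so each edge contributes at least $\max(n_{l_1},n_{l_2})\geq\frac{1}{2}(n_{l_1}+n_{l_2})$ distinct ratios to $\frac{A+A}{A+A}$. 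By disjointness of the $M$-sets, contributions from different edges are themselves disjoint. Summing,
\[ \left|\frac{A+A}{A+A}\right|\geq\frac{1}{2}\sum_{(l_1,l_2)\in E(T)}(n_{l_1}+n_{l_2})=\frac{1}{2}\sum_{m\in A/A}\deg_T(m)\,n_m\geq\frac{1}{2}\sum_m n_m=\frac{|A|^2}{2}, \]
where we used that every vertex of a spanning tree on $k\geq 2$ vertices has $\deg_T(m)\geq 1$. Combined with the sector reduction, this yields $\left|\frac{A+A}{A+A}\right|\gg|A|^2$.

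The principal obstacle in transporting the argument from $\mathbb{R}$ to $\mathbb{C}$ is the sector reduction itself: in the real-positive setting the entire configuration already lies on a half-line and no such loss occurs, whereas here one unavoidably absorbs a constant depending on $\epsilon$. The per-edge distinctness computation reduces to the two-line algebraic identity above, which depends only on $0\notin A$ and $l_1\neq l_2$, both of which are automatic. Everything else is a faithful translation of the Solymosi-style argument behind Theorem~\ref{main1}, with the minimum spanning tree on $A/A$ playing the role that the linear ordering of slopes played in the real proof.
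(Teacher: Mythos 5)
Your proposal is correct and follows essentially the same route as the paper: pigeonhole into a narrow angular sector, invoke the Konyagin--Rudnev minimum spanning tree claim for disjointness of the regions $M_{(l_1,l_2)}$, establish per-edge injectivity by an algebraic identity (your identity is a compact reformulation of the paper's Claim~\ref{claim1}), and sum over tree edges to recover $|A|^2$ up to a constant. Your closing sum $\sum_{(l_1,l_2)\in E(T)}(n_{l_1}+n_{l_2})=\sum_m\deg_T(m)n_m\geq\sum_m n_m$ is just a restatement of the paper's observation that each vertex of the spanning tree appears in at least one edge.
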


\begin{proof} In order to apply Claim \ref{mainclaim1}, we need to ensure that all of the arguments of elements of $A$ are sufficiently small. To do this, we just use the pigeonhole principle to identify a subset $A'\subset{A}$ such that $|A'|\gg{|A|}$ and for all $a_1,a_2\in{A'}$ we have $|arg(a_1)-arg(a_2)|<\epsilon$. Then, we rotate this set (that is, we dilate by a unit complex number $u$) to get a set $uA'$ with the required property that $|arg(z)|<\epsilon/2$ for all $z\in{uA'}$. Since the statement of the Theorem is invariant under dilation, it will suffice to show that
$$\left|\frac{uA'+uA'}{uA'+uA'}\right|\gg{|A'|^2},$$
as it then follows that
$$\left|\frac{A'+A'}{A'+A'}\right|=\left|\frac{uA'+uA'}{uA'+uA'}\right|\gg{|A'|^2}\gg{|A|^2}.$$
In order to simplify the notation slightly, we will assume from the outset that $A$ satisfies the conditions of Claim \ref{mainclaim1}. If this is not the case, then we relabel so that $uA'=A$, and the preceding analysis shows that we are dealing with the same problem.

Applying Claim \ref{mainclaim1}, we have a spanning tree $T$, whose edge set $E(T)$ consists of collection of $|A/A|-1$ pairs of elements of the ratio set. Write
$$E(T)=\{(l_i,m_i):1\leq{i}\leq{|A/A|-1}\}.$$
Since the sets $M_{(l_i,m_i)}$ such that $(l_i,m_i)\in{E(T)}$ are pairwise disjoint, it follows that
\begin{equation}
\left|\frac{A+A}{A+A}\right|\geq{\left|\bigcup_{i=1}^{|A/A|-1}\left(\frac{A+A}{A+A}\cap{M_{(l_i,m_i)}}\right)\right|}=\sum_{i=1}^{|A/A|-1}\left|\frac{A+A}{A+A}\cap{M_{(l_i,m_i)}}\right|.
\label{disjoint}
\end{equation}

Next, fix $i$, and consider the quantity $\left|\frac{A+A}{A+A}\cap{M_{(l_i,m_i)}}\right|$. There can be no loops in the minimal spanning tree $T$, and so certainly 
\begin{equation}
l_i\neq{m_i}.
\label{neq}
\end{equation}

Let $p=(p_1,p_2)$ be an arbitrarily chosen element of $A\times{A}$ with the property that $\frac{p_2}{p_1}=l_i$. Let $q=(q_1,q_2)$ and $q'=(q_1',q_2')$ be distinct elements of $A\times{A}$ such that 
\begin{equation}
\frac{q_2}{q_1}=\frac{q_2'}{q_1'}=m_i.
\label{eq2}
\end{equation}

\begin{claim} \label{claim1} 
$$\frac{p_2+q_2}{p_1+q_1}\neq{\frac{p_2+q_2'}{p_1+q_1'}}.$$
\end{claim}

\begin{proof} Suppose for a contradiction that
$$\frac{p_2+q_2}{p_1+q_1}=\frac{p_2+q_2'}{p_1+q_1'}.$$
After rearranging this expression, it follows that
\begin{equation}
p_2q_1'+q_2p_1+q_2q_1'=p_1q_2'+q_1p_2+q_1q_2'.
\label{long}
\end{equation}
By \eqref{eq2}, we have $q_2q_1'=q_2'q_1$, and therefore \eqref{long} can be simplified as follows:
\begin{equation}
p_2q_1'+q_2p_1=p_1q_2'+q_1p_2,
\label{short}
\end{equation}
and hence
$$p_1(q_2-q_2')=p_2(q_1-q_1').$$
It follows, from the fact that $q$ and $q'$ are distinct and $\frac{q_2}{q_1}=\frac{q_2'}{q_1'}$, that $q_1\neq{q_1'}$ and $q_2\neq{q_2'}$. Therefore,
\begin{equation}
\frac{p_2}{p_1}=\frac{q_2-q_2'}{q_1-q_1'}.
\label{long2}
\end{equation}
On the other hand, one can check that it follows from \eqref{eq2} that
$$\frac{q_2-q_2'}{q_1-q_1'}=\frac{q_2}{q_1},$$
and so
$$\frac{p_2}{p_1}=\frac{q_2}{q_1}.$$
However, this contradicts the fact that, by \eqref{neq},
$$\frac{p_2}{p_1}=l_i\neq{m_i}=\frac{q_2}{q_1}.$$
\end{proof}

Because of Claim \ref{claim1}, it is now known that, for this fixed element $p$ with $\frac{p_2}{p_1}=l_i$, we have
$$\left|\left\{\frac{p_2+q_2}{p_1+q_1}:(q_1,q_2)\in{A\times{A}},\frac{q_2}{q_1}=m_i\right\}\right|=\left|\left\{(q_1,q_2)\in{A\times{A}}:\frac{q_2}{q_1}=m_i\right\}\right|.$$
Also,
$$\left\{\frac{p_2+q_2}{p_1+q_1}:(q_1,q_2)\in{A\times{A}},\frac{q_2}{q_1}=m_i\right\}\subset{\frac{A+A}{A+A}\cap{M_{(l_i,m_i)}}},$$
and therefore
\begin{equation}
\left|\frac{A+A}{A+A}\cap{M_{(l_i,m_i)}}\right|\geq{\left|\left\{(q_1,q_2)\in{A\times{A}}:\frac{q_2}{q_1}=m_i\right\}\right|}.
\label{multiplicity1}
\end{equation}
Similarly, if we fix a specific point $q=(q_1,q_2)\in{A\times{A}}$ such that $\frac{q_2}{q_1}=m_i$, then the same argument shows that
\begin{align}
\left|\frac{A+A}{A+A}\cap{M_{(l_i,m_i)}}\right|&\geq{\left|\left\{\frac{p_2+q_2}{p_1+q_1}:(p_1,p_2)\in{A\times{A}},\frac{p_2}{p_1}=l_i\right\}\right|}
\\&=\left|\left\{(p_1,p_2)\in{A\times{A}}:\frac{p_2}{p_1}=l_i\right\}\right|.  \label{multiplicity2}
\end{align}
We abbreviate by introducing the notation 
$$r_{A/A}(x):=\left|\left\{(a,b)\in{A\times{A}}:\frac{b}{a}=x\right\}\right|,$$
for the number of representations of $x$ as an element of the ratio set $A/A$. Using this notation, it follows from \eqref{multiplicity1} and \eqref{multiplicity2} that
\begin{align}
\left|\frac{A+A}{A+A}\cap{M_{(l_i,m_i)}}\right|&\geq{\max{\{r_{A/A}(l_i),r_{A/A}(m_i)\}}}
\\&\geq{\frac{1}{2}\left(r_{A/A}(l_i)+r_{A/A}(m_i)\right)}
\\&\gg{r_{A/A}(l_i)+r_{A/A}(m_i)}. \label{close}
\end{align}
Now, since \eqref{close} holds for any $i$ in the range $1\leq{i}\leq{|A/A|-1}$, it follows that
\begin{equation}
\sum_{i=1}^{|A/A|-1}\left|\frac{A+A}{A+A}\cap{M_{(l_i,m_i)}}\right|\gg{\sum_{i=1}^{|A/A|-1}\big(r_{A/A}(l_i)+r_{A/A}(m_i)\big)}.
\label{closer}
\end{equation}
Since the edges of $T$ span the ratio set $A/A$, it follows that for every $x\in{A/A}$, the term $r_{A/A}(x)$ appears at least once in the right hand side of \eqref{closer}. Therefore,
\begin{equation}
\sum_{i=1}^{|A/A|-1}\left|\frac{A+A}{A+A}\cap{M_{(l_i,m_i)}}\right|\gg{\sum_{x\in{A/A}}r_{A/A}(x)}=|A|^2.
\label{closerer}
\end{equation}
Finally, \eqref{closerer} can be combined with \eqref{disjoint}, to conclude that
$$\left|\frac{A+A}{A+A}\right|\gg{|A|^2}.$$
\end{proof}

The next task is to generalise Lemma \ref{baloglemma}. This turns out to be a little bit more difficult, and it will be necessary to prove an adapted version of Claim \ref{mainclaim1}. Notice that the forthcoming Lemma \ref{baloglemma2} is slightly less general; we can only prove Lemma \ref{baloglemma} for complex numbers in the case when $\mathcal{C}=\mathcal{D}$, although this restriction is not a problem for all applications of the Lemma we have encountered.

\begin{lemma} \label{baloglemma2} Let $\mathcal{A,B}$ and $\mathcal{C}$ be finite sets of complex numbers. Then
$$|\mathcal{A}/\mathcal{B}||\mathcal{C}|^2\ll{|\mathcal{AC}+\mathcal{AC}||\mathcal{BC}+\mathcal{BC}|}.$$
\end{lemma}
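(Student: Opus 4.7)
The plan is to adapt the strategy of Theorem \ref{main3} to the asymmetric setting of the Lemma, parametrising pairs in $(\mathcal{BC}+\mathcal{BC})\times(\mathcal{AC}+\mathcal{AC})$ by edges of a minimum spanning tree on the ratio set $\mathcal{A}/\mathcal{B}$ together with ordered pairs from $\mathcal{C}^2$. As a preliminary step, apply angular pigeonholing and independent rotations to pass to subsets $\mathcal{A}'\subset\mathcal{A}$, $\mathcal{B}'\subset\mathcal{B}$, $\mathcal{C}'\subset\mathcal{C}$, each of size a constant fraction of the original, all contained in a single sufficiently small sector around the positive real axis. A short bookkeeping argument (for instance, first localising $\mathcal{A}/\mathcal{B}$ in an angular slice and then lifting the chosen representatives into localised copies of $\mathcal{A}$ and $\mathcal{B}$) is needed to ensure that $|\mathcal{A}'/\mathcal{B}'|\gg|\mathcal{A}/\mathcal{B}|$ survives the pigeonhole.

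The central step is to prove an adapted version of Claim \ref{mainclaim1}: for any two distinct $l,m\in\mathcal{A}'/\mathcal{B}'$ with representations $l=a/b$, $m=a'/b'$ and any $c,c'\in\mathcal{C}'$, the ratio
$$\frac{ac+a'c'}{bc+b'c'}=\lambda l+(1-\lambda)m, \qquad \lambda=\frac{bc}{bc+b'c'},$$
lies in an open region $M_{(l,m)}\subset\mathbb{C}$ symmetric about the segment $(l,m)$, and the $M$-regions attached to edges of a minimum spanning tree $T$ on $\mathcal{A}'/\mathcal{B}'$ are pairwise disjoint. The Konyagin--Rudnev construction should transfer with only cosmetic modifications: the sector hypotheses on $\mathcal{B}'$ and $\mathcal{C}'$ keep $\lambda$ close to the positive real axis, so that the ``complex convex combinations'' $\lambda l+(1-\lambda)m$ stay close to the actual segment $(l,m)$, and the disjointness then follows from the minimality of $T$ exactly as in \cite{KR}. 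This step is the principal obstacle.

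Granted the adapted claim, for each edge $(l_i,m_i)\in E(T)$ fix representations $l_i=a_i/b_i$ and $m_i=a_i'/b_i'$, and consider the map
$$\phi_i(c,c'):=\bigl(b_ic+b_i'c',\ a_ic+a_i'c'\bigr)\in(\mathcal{BC}+\mathcal{BC})\times(\mathcal{AC}+\mathcal{AC}).$$
Its coefficient matrix has determinant $b_ib_i'(m_i-l_i)\neq 0$, so $\phi_i$ is injective on $\mathcal{C}'\times\mathcal{C}'$; and since every image pair has slope in $M_{(l_i,m_i)}$, the pairwise disjointness of the $M$-regions forces the images of different $\phi_i$'s to be disjoint as well. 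Summing over the $|\mathcal{A}'/\mathcal{B}'|-1$ edges then gives
$$|\mathcal{AC}+\mathcal{AC}||\mathcal{BC}+\mathcal{BC}|\;\geq\;\bigl(|\mathcal{A}'/\mathcal{B}'|-1\bigr)|\mathcal{C}'|^2\;\gg\;|\mathcal{A}/\mathcal{B}||\mathcal{C}|^2,$$
which is the desired bound.
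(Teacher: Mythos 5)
Your outline does follow the paper's strategy: localize to a sector, adapt Claim~\ref{mainclaim1}, and tile $(\mathcal{AC}+\mathcal{AC})\times(\mathcal{BC}+\mathcal{BC})$ by disjoint blocks attached to edges of a minimum spanning tree on the retained ratios. Your key identity
$$\frac{ac+a'c'}{bc+b'c'}=\lambda l+(1-\lambda)m,\qquad \lambda=\frac{bc}{bc+b'c'}=\frac{1}{1+\tfrac{b'c'}{bc}},$$
is exactly the one the paper uses (written there as $z=l_i+(l_j-l_i)\tfrac{u}{1+u}$ with $u=a_jc_2/(a_ic_1)$), and your determinant calculation for the injectivity of $\phi_i$, as well as the disjointness of the images via the $M$-regions, are both correct.

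The gap --- which you flag as ``bookkeeping'' but do not resolve --- is the pigeonholing, and it is more than bookkeeping. Independently localizing $\mathcal{A}$, $\mathcal{B}$, $\mathcal{C}$ to constant-fraction subsets in small sectors gives no control of $|\mathcal{A}'/\mathcal{B}'|$ in terms of $|\mathcal{A}/\mathcal{B}|$, so your opening step as written fails, and the patch you sketch (localize the ratio set, then lift representatives) is neither what the paper does nor, as stated, a proof. The observation to extract from your own formula for $\lambda$ is that only the arguments of the representatives' \emph{denominators} $b,b'$ and of $c,c'$ must be controlled to keep $\lambda$ near the positive real axis; the numerators $a,a'$ and the ratios $l,m$ themselves may lie anywhere in $\mathbb{C}$. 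The paper therefore never localizes $\mathcal{A}$ or $\mathcal{A}/\mathcal{B}$: after localizing $\mathcal{C}$, it fixes one representative pair per ratio, forming a set $P$ with $|P|=|\mathcal{B}/\mathcal{A}|$, defines $w(a)=|\{b:(a,b)\in P\}|$ so that $\sum_a w(a)=|P|$, and applies a single \emph{weighted} pigeonhole to the denominator coordinate to find a sector capturing $\gg|\mathcal{B}/\mathcal{A}|$ of this weight; the spanning tree is then built on those $\gg|\mathcal{B}/\mathcal{A}|$ retained ratios, which remain spread over $\mathbb{C}$. You should either adopt this weighted pigeonhole, or carry out your two-stage localization (ratio set, then representative denominators, noting numerators are then automatically localized) explicitly --- as written, the claim $|\mathcal{A}'/\mathcal{B}'|\gg|\mathcal{A}/\mathcal{B}|$ is asserted but not established.
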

\begin{proof} We may assume that $0\notin{\mathcal{A}\cup{\mathcal{B}}}$, since otherwise we simply delete this element from these sets, which effects only constant factors.

We begin with some pigeonholing, which is necessary in order to prove an adapted version of Claim \ref{mainclaim1}. There exists a small angular sector of the complex plane which contains a positive proportion of elements of $\mathcal{C}$. To be precise, let $\epsilon>0$ be a small but fixed value, which can be chosen with hindsight to be small enough for the proof to work. Then, there exists a set $\mathcal{C}'\subset{\mathcal{C}}$ such that $|\mathcal{C}'|\gg{|\mathcal{C}|}$ and
$$z,z'\in{\mathcal{C'}}\Rightarrow{|arg(z)-arg(z')|<\epsilon}.$$
Then, we can dilate (which can be thought of as a rotation of the complex plane) by some factor $\lambda_1\in{\mathbb{C}}$ so that all of the elements of $\lambda_1\mathcal{C}'$ are close to the real axis. By choosing $\lambda_1$ suitably, we have $|arg(z)|<\frac{\epsilon}{2}$ for all $z\in{\lambda_1\mathcal{C}'}$.

We also need to do some slightly more subtle pigeonholing on the ratio set $\mathcal{B}/\mathcal{A}$. First, choose a representative set $P$, by arbitrarily picking one pair from $\mathcal{A}\times{\mathcal{B}}$ to represent each element of $\mathcal{B}/\mathcal{A}$. To be precise, write $\mathcal{B}/\mathcal{A}=\{p_1,\cdots,p_{|\mathcal{B}/\mathcal{A}|}\}$, and for each $p_i\in{\mathcal{B}/\mathcal{A}}$ fix a representative pair $(a_i,b_i)$ such that $\frac{b_i}{a_i}=p_i$. We write
$$P:=\{(a_i,b_i):1\leq{i}\leq{|\mathcal{B}/\mathcal{A}|}\},$$
and note that
\begin{equation}
\frac{b_i}{a_i}=\frac{b_j}{a_j}\Leftrightarrow{i=j}.
\label{unique}
\end{equation}

Given $a\in{\mathcal{A}}$, define
$$w(a):=|\{b\in{\mathcal{B}}:(a,b)\in{P}\}|,$$
and note that
$$\sum_{a\in{\mathcal{A}}}w(a)=|P|=|\mathcal{B}/\mathcal{A}|.$$

By the pigeonhole principle, we can find a subset $\mathcal{A}'\subset{\mathcal{A}}$ which is contained in a small angular sector of $\mathbb{C}$ and makes a large contribution to the quantity $\sum_{a\in{\mathcal{A}}}w(a)$. To be precise, we assume for simplicity that $\frac{2\pi}{\epsilon}$ is an integer, and decompose the complex plane into disjoint slices
$$\mathbb{C}_j:=\{z\in{\mathbb{C}^*}:(j-1)\epsilon\leq{arg(z)}<j\epsilon\},$$
where $j\in{\mathbb{N}}$ ranges from $1$ to $\frac{2\pi}{\epsilon}$. Since these sets are disjoint, we have
$$\sum_{j=1}^{\frac{2\pi}{\epsilon}}\sum_{a\in{\mathcal{A}\cap{\mathbb{C}_j}}}w(a)=\sum_{a\in{\mathcal{A}}}w(a)=|P|.$$
Therefore, there exists an integer $j_0$ which determines a subset $\mathcal{A}':=\mathcal{A}\cap{\mathbb{C}_{j_0}}$, with the property that
$$\sum_{a\in{\mathcal{A}'}}w(a)\geq{\frac{\epsilon|\mathcal{B}/\mathcal{A}|}{2\pi}}\gg{|\mathcal{B}/\mathcal{A}|},$$
and $|arg(z)-arg(z')|<\epsilon$ for all $z,z'\in{\mathcal{A}'}$.
Once again, there exists a scalar $\lambda_2\in{\mathbb{C}}$ such that $|arg(z)|<\frac{\epsilon}{2}$ for all $z\in{\lambda_2\mathcal{A}'}$.

Define $P'\subset{P}$ to be the set
$$P':=\{(a,b)\in{P}:a\in{\mathcal{A}'}\},$$
and observe that
$$|P'|=\sum_{a\in{\mathcal{A}'}}|\{b\in{\mathcal{B}}:(a,b)\in{P}\}|=\sum_{a\in{\mathcal{A}'}}w(a)\gg{|\mathcal{B}/\mathcal{A}|}.$$
Next, we use these carefully chosen subsets $P'$ and $\mathcal{C}'$ to prove an adaptation of Claim \ref{mainclaim1}.

\begin{claim} \label{mainclaim2} For each $(a_i,b_i)\in{P'}$, we write $l_i=\frac{b_i}{a_i}$. Consider the set of ratios ${\{l_i:(a_i,b_i)\in{P'}\}}$, and fix a pair $(l_i,l_j)$ of elements from this set such that $i\neq{j}$. Then, for any pair $(c_1,c_2)\in{\mathcal{C}'\times{\mathcal{C'}}}$, the sum
$$(a_ic_1,b_ic_1)+(a_jc_2,b_jc_2)=(a_ic_1+a_jc_2,b_ic_1+b_jc_2)$$
determines a ratio
$$z=\frac{b_ic_1+b_jc_2}{a_ic_1+a_jc_2}\in{\mathbb{C}},$$
and this $z$ lies in an open set $M_{(l_i,l_j)}$, which is symmetric about the open line interval $(l_i,l_j)$ connecting $l_i$ and $l_j$ and also contains $(l_i,l_j)$.

Furthermore, the set of ratios $\{l_i:(a_i,b_i)\in{P'}\}$ can be viewed as the vertex set in $\mathbb{R}^2$ for a geometric graph. Let $T$ to be a minimal spanning tree for these vertices; that is $T$ is a spanning tree for $\{l_i:(a_i,b_i)\in{P'}\}$ with the property that there is no spanning tree $T'$ such that
$$\sum_{(l_i,l_j)\in{E(T')}}|l_i-l_j|<\sum_{(l_i,l_j)\in{E(T)}}|l_i-l_j|,$$
where $|l_i-l_j|$ denotes the Euclidean distance between the points $l_i$ and $l_j$. Then the sets $M_{(l_i,l_j)}$, such that $(l_i,l_j)\in{E(T)}$, are pairwise disjoint.
\end{claim}
\begin{proof} Write
$$u:=\frac{a_jc_2}{a_ic_1}=\frac{(\lambda_2a_j)(\lambda_1c_2)}{(\lambda_2a_i)(\lambda_1c_1)}.$$
One can check that
\begin{equation}
z=\frac{b_ic_1+b_jc_2}{a_ic_1+a_jc_2}=l_i+(l_j-l_i)\frac{u}{1+u}.
\label{important}
\end{equation}
By the earlier pigeonholing, we know that ${|arg(\lambda_2a_j)|,|arg(\lambda_2a_i)|,|arg(\lambda_1c_2)|,|arg(\lambda_1c_1)|<\frac{\epsilon}{2}}$. It therefore follows that $|arg(u)|<2\epsilon$, that is
$$u\in{W_{2\epsilon}}:=\{z:arg(z)<2\epsilon\}.$$
Therefore, $\frac{u}{1+u}$ lies in the set $M(W_{2\epsilon}):=M_{2\epsilon}$, which is the image of $W_{2\epsilon}$ under the M\"{o}bius map $M(z)=\frac{z}{1+z}$.

Define $M_{(l_i,l_j)}$ from the statement of the claim to be the set
$$M_{(l_i,l_j)}:=\{l_i+(l_j-l_i)M_{2\epsilon}\}.$$
By \eqref{important}, we have
$$z=\frac{b_ic_1+b_jc_2}{a_ic_1+a_jc_2}\in{M_{(l_i,l_j)}}.$$
The rest of the proof is concerned with establishing that the sets $M_{(l_i,l_j)}$ are symmetric with respect to the open straight line from $l_i$ to $l_j$ whilst also containing the line, and also that the sets are pairwise disjoint. This is exactly what has already been established in the proof of the original claim in \cite{KR}, and so we refer the reader to \cite{KR} for a full account.
\end{proof}

We now return to the proof of Lemma \ref{baloglemma2}. Fix an edge $(l_i,l_j)\in{E(T)}$, the edge set of our minimal spanning tree. Consider the set of sums
\begin{align*}S_{ij}:&=\{(a_ic_1,b_ic_1)+(a_jc_2+b_jc_2):c_1,c_2\in{\mathcal{C}'}\}
\\&=\{(a_ic_1+a_jc_2,b_ic_1+b_jc_2):c_1,c_2\in{\mathcal{C}'}\},
\end{align*}
and observe that $S_{ij}\subset{(\mathcal{AC}+\mathcal{AC})\times{(\mathcal{BC}+\mathcal{BC})}}$. The proof of the lemma will follow easily once we have established the following two claims:

\begin{claim} \label{firstclaim}
For any $(l_i,l_j)\in{E(T)}$, we have $|S_{ij}|=|\mathcal{C}'|^2$.
\end{claim}

\begin{claim} \label{secondclaim}
For any pair of distinct edges $(l_i,l_j),(l_{i'},l_{j'})\in{E(T)}$, we have $S_{ij}\cap{S_{i'j'}}=\emptyset$.
\end{claim}

First, let us show how these claims can be used to conclude the proof of Lemma \ref{baloglemma2}. Note that
$$\bigcup_{(l_i,l_j)\in{E(T)}}S_{ij}\subset{(\mathcal{AC}+\mathcal{AC})\times{(\mathcal{BC}+\mathcal{BC})}}.$$
By Claim \ref{secondclaim}, this is a disjoint union, and so
\begin{align}
|\mathcal{AC}+\mathcal{AC}||\mathcal{BC}+\mathcal{BC}|&=|(\mathcal{AC}+\mathcal{AC})\times{(\mathcal{BC}+\mathcal{BC})}|
\\&\geq{\sum_{(l_i,l_j)\in{E(T)}}|S_{ij}|}
\\&=\sum_{(l_i,l_j)\in{E(T)}}|\mathcal{C}'|^2 \label{ap1}
\\&\gg{\sum_{(l_i,l_j)\in{E(T)}}|\mathcal{C}|^2}, \label{ap2}
\end{align}
where \eqref{ap1} is a consequence of Claim \ref{firstclaim}, and \eqref{ap2} comes from the construction of the set $\mathcal{C}'$.

Now, since $T$ is a minimal spanning tree for the vertex set $\{l_i:(a_i,b_i)\in{P'}\}$, it must be the case that
$$|E(T)|=|P'|-1\gg{|\mathcal{B}/\mathcal{A}|}.$$
Combining this with \eqref{ap2}, we have
$$|\mathcal{AC}+\mathcal{AC}||\mathcal{BC}+\mathcal{BC}|\gg{|\mathcal{B}/\mathcal{A}||\mathcal{C}|^2},$$
as required.

It remains to prove Claims \ref{firstclaim} and \ref{secondclaim}.

\textit{Proof of Claim \ref{firstclaim}}. To check that $|S_{ij}|=|\mathcal{C}'|^2$, we need to check that there is no repetition in $S_{ij}$. That is, we need to show that there exist no non-trivial solutions to the equation
\begin{equation}
(a_ic_1+a_jc_2,b_ic_1+b_jc_2)=(a_ic_3+a_jc_4,b_ic_3+b_jc_4),
\label{nontriv}
\end{equation}
such that $(c_1,c_2,c_3,c_4)\in{\mathcal{C}'^4}$. By a non-trivial solution, we mean a solution such that $(c_1,c_2)\neq{(c_3,c_4)}$. Indeed, suppose that a quadruple $(c_1,c_2,c_3,c_4)\in{\mathcal{C}'^4}$ satisfies \eqref{nontriv}, and let us show that it must be the case that $(c_1,c_2)=(c_3,c_4)$. We have
$$a_i(c_1-c_3)+a_j(c_2-c_4)=0$$
and
$$b_i(c_1-c_3)+b_j(c_2-c_4)=0.$$
Let us assume for a contradiction that $c_2-c_4\neq{0}$. Then, recalling that $0\notin{\mathcal{A,B}}$, we have
$$\frac{-a_j}{a_i}=\frac{c_1-c_3}{c_2-c_4}.$$
and
$$\frac{-b_j}{b_i}=\frac{c_1-c_3}{c_2-c_4}.$$
This tells us that $\frac{b_i}{a_i}=\frac{b_j}{a_j}$. However, \eqref{unique} then implies that $i=j$. This is a contradiction, since we then have a loop edge $(l_i,l_i)\in{E(T)}$, but the minimal spanning tree $T$ cannot contain any loops. Therefore, our assumption that $c_2\neq{c_4}$ must be false. An identical argument then shows that $c_1=c_3$, and this confirms that there exist no non-trivial solutions to \eqref{nontriv}, as required.
\begin{flushright}
\qedsymbol
\end{flushright}

\textit{Proof of Claim \ref{secondclaim}}. Suppose for a contradiction that there exists a pair $(z_1,z_2)\in{S_{ij}\cap{S_{i'j'}}}$. Then, since $(z_1,z_2)\in{S_{ij}}$, it can be written in the form
$$(z_1,z_2)=(a_ic_1+a_jc_2,b_ic_1+b_jc_2)$$
for some $c_1,c_2\in{\mathcal{C}'}$. Then, Claim \ref{mainclaim2} tells us that
$$\frac{z_2}{z_1}\in{M_{(l_i,l_j)}}.$$
However, the same argument then shows that
$$\frac{z_2}{z_1}\in{M_{(l_{i'},l_{j'})}}.$$
Therefore, $z\in{M_{(l_i,l_j)}\cap{M_{(l_{i'},l_{j'})}}}$, where $(l_i,l_j)$ and $(l_i',l_j')$ are distinct edges of $T$. This contradicts the non-intersection property established in Claim \ref{mainclaim2}. Our original assumption must therefore be false, and so
$$S_{ij}\cap{S_{i'j'}}=\emptyset,$$
as required.
\begin{flushright}
\qedsymbol
\end{flushright}

Having proven Claims \ref{firstclaim} and \ref{secondclaim}, the proof of Lemma \ref{baloglemma2} is now comnplete.
\end{proof}

It is now a straightforward task to prove a version of Theorem \ref{main2} in the complex setting as follows:

\begin{theorem} \label{main4} Let $A$ be a finite set of complex numbers and let $k\geq{1}$ be an integer. Then
\begin{equation}
|4^{k-1}A^{(k)}|\gg_{k}{|A|^k}.
\label{target2}
\end{equation}
\end{theorem}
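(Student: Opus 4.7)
The plan is to closely mirror the inductive proof of Theorem \ref{main2}, replacing the real ingredients by the complex analogues established in this section. I would induct on $k$, with the trivial base case at $k=1$ giving $|A|\geq|A|$. For the inductive step, I assume that $|4^{k-2}A^{(k-1)}|\gg_{k-1}|A|^{k-1}$ and aim to deduce the corresponding bound for $k$.

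The heart of the argument is a single application of Lemma \ref{baloglemma2} with the choice $\mathcal{A}=\mathcal{B}=A+A$ and $\mathcal{C}=4^{k-2}A^{(k-1)}$; this is permitted precisely because Lemma \ref{baloglemma2} is stated only for the diagonal case $\mathcal{C}=\mathcal{D}$, which fits here. This yields
$$\left|\frac{A+A}{A+A}\right|\,|4^{k-2}A^{(k-1)}|^2\ll\bigl|(A+A)\cdot 4^{k-2}A^{(k-1)}+(A+A)\cdot 4^{k-2}A^{(k-1)}\bigr|^2.$$
I would then bound the left-hand side from below by combining Theorem \ref{main3}, which gives $|(A+A)/(A+A)|\gg|A|^2$, with the inductive hypothesis, which gives $|4^{k-2}A^{(k-1)}|^2\gg_{k-1}|A|^{2k-2}$. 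Together these yield a lower bound of the form $c_k|A|^{2k}$ for some $c_k>0$ depending on $k$.

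To finish, I would invoke the same set containment used in the real proof, namely
$$(A+A)\cdot 4^{k-2}A^{(k-1)}+(A+A)\cdot 4^{k-2}A^{(k-1)}\subseteq 4^{k-1}A^{(k)},$$
and take square roots to obtain $|4^{k-1}A^{(k)}|\gg_k|A|^k$, closing the induction. There is no substantive obstacle, since the two essential ingredients (Theorem \ref{main3} and Lemma \ref{baloglemma2}) have already been assembled; the only reason we must accept the qualitative $\gg_k$ in place of the clean constants of Theorem \ref{main2} is that each induction step loses a multiplicative constant via the implicit constants in those two results, and these compound over the $k-1$ applications of the inductive step.
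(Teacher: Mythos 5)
Your proposal matches the paper's proof of Theorem \ref{main4} essentially verbatim: the same induction on $k$, the same choice $\mathcal{A}=\mathcal{B}=A+A$, $\mathcal{C}=4^{k-2}A^{(k-1)}$ in Lemma \ref{baloglemma2}, the same use of Theorem \ref{main3}, and the same set containment into $4^{k-1}A^{(k)}$. The only cosmetic difference is that the paper fixes the induction hypothesis with an explicit constant $(c_1c_2)^{(k-1)/2}$ rather than carrying a $\gg_k$, but you correctly note that the constants compound over the $k-1$ steps, which is exactly what that explicit form records.
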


\begin{proof} Theorem \ref{main3} implies that there exist a fixed constant $c_1>0$ such that
\begin{equation}
\left|\frac{A+A}{A+A}\right|\geq{c_1|A|^2}.
\label{c1}
\end{equation}
Similarly, by Lemma \ref{baloglemma2}, there exists $c_2>0$ such that
\begin{equation}
|\mathcal{AC}+\mathcal{AC}|^2\geq{c_2|\mathcal{C}|^2|\mathcal{A}/\mathcal{A}|},
\label{c2}
\end{equation}
for any finite sets $\mathcal{A},\mathcal{C}\subset{\mathbb{C}}$.

To prove Theorem \ref{main4}, it will be established that
\begin{equation}
|4^{k-1}A^{(k)}|\geq{(c_1c_2)^{\frac{k-1}{2}}|A|^k}.
\label{aim}
\end{equation}

The proof is by induction on $k$. The base case when $k=1$ is just the trivial statement that $|A|\geq{|A|}$. Now suppose that \eqref{aim} holds for $k-1$; that is
\begin{equation}
|4^{k-2}A^{(k-1)}|\geq{(c_1c_2)^{\frac{k-2}{2}}|A|^{k-1}}.
\label{ih2}
\end{equation}
Apply \eqref{c2} with $\mathcal{A}=A+A$ and $\mathcal{C}=4^{k-2}A^{(k-1)}$. By the inductive hypothesis \eqref{ih2}, and \eqref{c1}, we obtain
\begin{align}
|(A+A)4^{k-2}A^{(k-1)}+(A+A)4^{k-2}A^{(k-1)}|^2&\geq{c_2(c_1c_2)^{k-2}|A|^{2k-2}\left|\frac{A+A}{A+A}\right|}
\\&\geq{(c_2c_1)^{k-1}|A|^{2k}}. \label{lb12}
\end{align}
On the other hand
\begin{align*}
(A+A)4^{k-2}A^{(k-1)}+(A+A)4^{k-2}A^{(k-1)}&\subseteq{4^{k-2}A^{(k)}+4^{k-2}A^{(k)}+4^{k-2}A^{(k)}+4^{k-2}A^{(k)}}
\\&=4^{k-1}A^{(k)},
\end{align*}
and this can be combined with \eqref{lb12}, in order to conclude that
$$|4^{k-1}A^{(k)}|\geq{(c_1c_2)^{\frac{k-1}{2}}|A|^k},$$
as required
\end{proof}

\section*{Acknowledgements}
The authors are very grateful to Misha Rudnev for his suggestion of extending these results to the complex setting.

\end{document}